\newtheorem*{theorem}{Theorem}
\begin{document}

\title{The conjugate locus on convex surfaces}

\author{Thomas Waters}

\address{T. Waters, Department of Mathematics, University of Portsmouth, England PO13HF, thomas.waters@port.ac.uk}

\maketitle

\begin{abstract}
The conjugate locus of a point on a surface is the envelope of geodesics emanating radially from that point. In this paper we show that the conjugate loci of generic points on convex surfaces satisfy a simple relationship between the rotation index and the number of cusps. As a consequence we prove the `vierspitzensatz':  the conjugate locus of a generic point on a convex surface must have at least four cusps. Along the way we prove certain results about evolutes in the plane and geodesic curvature. (Note: this is a corrected version of the original paper, see comment on page 5 and Appendix B).
\end{abstract}

\section{Introduction}

The conjugate locus is a classical topic in Differential Geometry (see Jacobi \cite{jacobi}, Poincar\'{e} \cite{poincare}, Blaschke \cite{blaschke} and Myers \cite{myers}) which is enjoying renewed interest due to the recent proof of the famous Last Geometric Statement of Jacobi by Itoh and Kiyohara \cite{Itoh1} (Figure \ref{cps}, see also \cite{Itoh3},\cite{Itoh2}, \cite{Sinclair1}) and the recent development of interesting applications (for example \cite{wolter1}, \cite{wolter2}, \cite{Bonnard3}, \cite{Bonnard5}). Recent work by the author \cite{TWbif} showed how, as the base point is moved in the surface, the conjugate locus may spontaneously create or annihilate pairs of cusps; in particular when two new cusps are created there is also a new ``loop'' of a particular type, and this suggests a relationship between the number of cusps and the number of loops. The current work will develop this theme, the main result being:

\setcounter{thm}{2}

\begin{thm}\label{thmrot}
Let $\mS$ be a smooth strictly convex surface and let $p$ be a generic point in $\mS$. Then the conjugate locus of $p$ satisfies \begin{equation} i=(n-2)/2, \label{iandnorig} \end{equation} where $i$ is the rotation index of the conjugate locus in $\mS/p$ and $n$ is the number of cusps.
\end{thm}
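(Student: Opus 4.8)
Throughout, parametrise the geodesics from $p$ by their unit initial direction, $\gamma_\phi$ with $\phi\in\mathbb{R}/2\pi\mathbb{Z}$, let $r(\phi)$ be the distance to the first conjugate point along $\gamma_\phi$, and write the conjugate locus as $c(\phi)=\exp_p\bigl(r(\phi)\gamma_\phi'(0)\bigr)$. The starting point is the envelope identity: differentiating and using that the Jacobi field obtained by rotating the initial direction of $\gamma_\phi$ vanishes exactly at the first conjugate point, one gets
\[
c'(\phi)=r'(\phi)\,\gamma_\phi'(r(\phi)),
\]
so the velocity of $c$ is always a signed multiple of the radial direction at $c(\phi)$; hence $c$ is an immersed curve away from the simple zeros of $r'$, at each of which it has an ordinary (semicubical) cusp, and the number $n$ of these is even since $r$ is smooth and $2\pi$-periodic. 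This is the surface version of the identity $E'=\rho' N$ for the evolute $E$ of a plane curve, which is why the planar results proved earlier in the paper are the right model.

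The plan is to show that $n-2i$ is invariant under deformations of the pair $(\mS,p)$ through smooth strictly convex surfaces and generic base points, and then to evaluate it on one example. Convexity makes the space of such pairs connected (the convex bodies form a convex set, and the generic points form an open dense subset varying continuously), so a generic path between two pairs meets only the codimension-one degeneracies of the conjugate locus: birth or death of a cusp pair, a self-tangency, or a triple point. Self-tangencies and triple points change neither $n$ nor the rotation index $i$, so $n-2i$ is locally constant across them. At a cusp-pair birth $\Delta n=2$ by definition, and one must show $\Delta i=1$; this is where \cite{TWbif} enters, for the feature created there is a `loop' of a definite type, and one checks from the local normal form of the bifurcation (the same normal form governs the evolute, which is the role of the plane results) that inserting this loop changes the total turning of the tangent of $c$ by exactly $+2\pi$. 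Granting this, $n-2i$ is globally constant; evaluating on a triaxial ellipsoid and a non-umbilic point, where Jacobi's theorem gives $n=4$ and the conjugate locus is a (stretched) astroid of rotation index $1$, forces $n-2i=2$, i.e.\ $i=(n-2)/2$.

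The main obstacle is precisely the cusp-pair step: one has to verify that the loop produced when two new cusps appear is always of the `positive' type, contributing $+1$ and not $-1$ to the rotation index, uniformly over all convex surfaces. This sign is not automatic — for a generic plane evolute the two kinds of vertex of the base curve give cusps of opposite signs — and the point is that for the conjugate locus the relevant Jacobi field has \emph{negative} derivative $\dot f_\phi(r(\phi))<0$ at the first conjugate point (because $r(\phi)$ is its first zero, so $f_\phi$ is coming down to zero there). This rigidity is what rules out the `negative' loop and pins the sign; it should also be the mechanism behind the sharper `vierspitzensatz', since $i\ge 1$ for a closed immersed curve forces $n\ge 4$. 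Carrying out this local sign computation, together with the transversality statements needed to justify the reduction to the three codimension-one degeneracies and the connectedness claim, is the substance of the proof; the envelope identity, the base case, and the insensitivity of $i$ and $n$ to self-tangencies and triple points are then routine.
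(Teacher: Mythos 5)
Your strategy is genuinely different from the paper's. The paper does not deform anything: it projects $\bbet$ radially from $p$ onto the supporting plane parallel to the one at $p$ (a homeomorphism $\Pi:\mS/p\to\mathbb{R}^2$), and then observes that the tangent line to the projected curve at $\Pi(q)$ can be identified, continuously and one-to-one, with the point where the projected geodesic segment $[p,q]$ meets a small projected geodesic circle about $p$. Since traversing $C_p$ once corresponds to $\psi$ making one full turn, the tangent line makes exactly one (clockwise) rotation, and the general planar bookkeeping of Section 2 (Theorem 3, in the form $i=(n+2I)/2$ with $I=-1$) gives the result in one stroke. Your envelope identity, the evenness of $n$, and the sign $\xi_{,s}<0$ at the first conjugate point are all correct and are indeed the ingredients the paper also relies on; but the paper converts them directly into a statement about the total turning of the radial direction, with no homotopy, no transversality, and no base case needed.

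As written, your proposal has genuine gaps, most of which you acknowledge but none of which you close. (i) The central step — that every cusp-pair birth changes $i$ by $+1$ and never $-1$ — is asserted, not proved; this is exactly the content that needs a computation, and without it the argument proves only $n-2i\equiv 0\pmod 2$ along the path, which is already obvious. (ii) The list of codimension-one degeneracies is not justified, and one relevant degeneracy is missing: the rotation index is taken in the \emph{punctured} surface $\mS/p$, so if during the deformation an arc of the conjugate locus sweeps across the point $p$ itself, $i$ jumps by $\pm1$ while $n$ is unchanged, destroying the invariance of $n-2i$. The lower bound $R\geq\pi/\sqrt{K_{max}}$ controls the distance to $p$ \emph{along the geodesic}, not the distance in $\mS$, so this crossing is not obviously excluded and must be ruled out or tracked. (iii) The connectedness and genericity-of-paths claims for the space of pairs $(\mS,p)$ are nontrivial and unproved. (iv) Your base case leans on the Itoh--Kiyohara theorem, a much deeper result than the statement being proved (a small generic perturbation of the round sphere, where the conjugate locus is a small four-cusped curve of index $1$, would be a cheaper anchor). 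Finally, your closing remark that $i\geq1$ for a closed immersed curve forces $n\geq 4$ does not follow from this theorem alone: ruling out $i=0$ (hence $n=2$) is a separate argument, which the paper supplies via the ``count'' and an Euler-characteristic lemma. In short: the approach is a legitimate alternative in outline, but the decisive sign computation and the puncture-crossing issue leave it short of a proof.
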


We will say more precisely what we mean by `generic' and `rotation index' in the next sections. It's worth pointing out that the main interest in the rotation index is that, for regular curves, $i$ is invariant under regular homotopy \cite{whitney}; however envolopes in general are not regular curves. Nonetheless the rotation index gives us a sense of the `topology' of the envelope, see for example Arnol'd \cite{arnold} (if the reader wishes we could perform a `smoothing' in the neighbourhood of the cusps to make the envelope regular, see \cite{lee}). This formula puts restrictions on the conjugate locus, and in particular we use it to then prove the so-called `vierspitzensatz':

\begin{thm}\label{thmvier}
Let $\mS$ be a smooth strictly convex surface and let $p$ be a generic point in $\mS$. The conjugate locus of $p$ must have at least 4 cusps.
\end{thm}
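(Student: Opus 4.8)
The plan is to reduce Theorem \ref{thmvier} to Theorem \ref{thmrot}. By \eqref{iandnorig} we have $n = 2i+2$, so it suffices to show the rotation index $i$ of the conjugate locus is at least $1$. First recall why the cusps of the conjugate locus are the critical points of the conjugate radius. Write $\gamma_\theta$ for the unit-speed geodesic leaving $p$ in direction $\theta \in S^1$, let $\ell(\theta)$ be the distance along $\gamma_\theta$ to its first conjugate point, and let $c(\theta) = \exp_p(\ell(\theta)\gamma_\theta'(0))$ parametrise the conjugate locus. Differentiating and using that the Jacobi field $\partial_\theta \exp_p$ vanishes at the conjugate point gives $c'(\theta) = \ell'(\theta)\,\gamma_\theta'(\ell(\theta))$; since the geodesic velocity never vanishes, the cusps of $c$ are exactly the zeros of $\ell'$ (ordinary cusps, for generic $p$). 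Thus $\ell$ is a Morse function on the circle, so it has an even number $n \ge 2$ of critical points; already this gives $i \ge 0$, and it remains only to exclude $n = 2$, i.e. $i = 0$.

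So suppose for contradiction that $\ell$ has a single maximum, at $\theta = a$, and a single minimum, at $\theta = b$; then $\ell' > 0$ on one of the two arcs of $S^1$ cut out by $a,b$ and $\ell' < 0$ on the other, and the conjugate locus is a pair of regular arcs meeting at the two cusps $c(a), c(b)$, with rotation index $0$. I would rule this out by a four-vertex-type argument, transferring to the conjugate locus the planar results on evolutes established earlier in the paper. The identity $c'(\theta) = \ell'(\theta)\gamma_\theta'(\ell(\theta))$ plays the role of the Frenet relation $E' = \rho'\,\mathbf n$ for an evolute; integrating it against a suitable weight $h(\theta)$ — built from the position of $c(\theta)$ relative to a distinguished geodesic $L$ through $c(a)$ and $c(b)$, together with a Jacobi-field solution along $\gamma_\theta$ — and integrating by parts, one obtains $\oint_{S^1} \ell'(\theta)\,h(\theta)\,d\theta = 0$. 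On the other hand, with only the two cusps $c(a), c(b)$ the curve $c$ meets $L$ in just those two points, so $h$ changes sign only there, where $\ell'$ also changes sign; choosing the orientation of $L$ correctly, $\ell'(\theta)\,h(\theta)$ then has a fixed sign and is not identically zero, contradicting the vanishing of its integral. Hence $\ell$ has at least four critical points, and the conjugate locus at least four cusps.

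The main obstacle is precisely this last step. Theorem \ref{thmrot} is by itself consistent with $n = 2$ (it only forces $i = 0$ in that case), so the bound $n \ge 4$ is not a matter of index bookkeeping: it is the genuine analogue of the four-vertex theorem, and the work lies in producing the correct comparison geodesic $L$ and weight $h$, in establishing the integral identity, and — the delicate point, exactly as in the classical four-vertex theorem — in checking that the sign of $h$ on the two arcs really does match that of $\ell'$. Strict convexity ($K>0$) enters here to control the Jacobi fields along the $\gamma_\theta$, hence the behaviour of $\ell$ and the position of the two arcs of $c$ relative to $L$. Everything else — the reduction through \eqref{iandnorig}, the identification of cusps with critical points of $\ell$, and the parity of $n$ — is routine.
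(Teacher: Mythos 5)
Your opening reduction coincides with the paper's: via \eqref{iandnorig} and the fact that the distance function $R$ has an even, non-zero number of critical points (so $n\neq 0,1,3$), everything comes down to excluding $n=2$, i.e.\ rotation index $0$. Up to there you are correct. But the step that actually excludes $n=2$ is precisely the step you have not supplied, and as sketched it has a genuine gap. The integral identity $\oint \ell'(\theta)h(\theta)\,d\theta=0$ is asserted, not established: the classical four-vertex identity $\oint k'(ax+by+c)\,ds=0$ rests on the existence of affine coordinate functions in the plane whose derivatives close up under the Frenet equations, and on a general convex surface there is no analogue of an affine function adapted to a geodesic $L$; the Jacobi equation does not hand you such a weight. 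The sign argument is also unjustified: you need the conjugate locus to meet $L$ in exactly the two cusps, but the conjugate locus is only \emph{locally} convex (indeed the paper stresses it is nowhere globally convex), so a geodesic through the two cusps can cross the arcs many times. Nor can you reroute through the planar four-vertex theorem applied to the distance curve: to realise $R(\psi)$ as the radius-of-curvature function of a simple closed convex plane curve one needs $\oint R(\psi)e^{i\psi}\,d\psi=0$, which the conjugate distance function has no reason to satisfy (the paper only invokes the \emph{converse} four-vertex theorem \emph{after} Theorem \ref{thmvier} is proved).

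The paper closes this gap by an entirely different, combinatorial mechanism: it defines the ``count'' $m(o)$ as the number of geodesic segments from $p$ through $o$, notes $m\geq 1$ everywhere and $m=1$ in the component containing $p$, invokes the McIntyre--Cairns formula $i=\sum_{m\geq 2}\chi_m$ for the rotation index, and proves (Lemma 3) that every hole in a superlevel set $S_m$ forces the existence of a disc component at a higher count, so that $\sum_{m\geq 2}\chi_m\geq 1$; hence $i\geq 1$ and $n\geq 4$. If you wish to keep your analytic route you must actually construct $L$ and $h$, prove the integral identity, and prove the two-intersection property --- the three things you yourself flag as ``the main obstacle.'' As it stands the proposal correctly locates the difficulty but does not overcome it.
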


A central theme of this paper is the analogy between the conjugate locus on convex surfaces and the evolutes of simple convex plane curves; in Section 2 we focus on the latter and prove some results relating to rotation index. In Section 3 we focus on the conjugate locus and after some definitions we prove Theorems \ref{thmrot} and \ref{thmvier} and derive a formula for the geodesic curvature of the conjugate locus. Theorems 1-3 are new to the best of our knowledge, as is the expression for the geodesic curvature in Section 3. The proof of Theorem \ref{thmvier} is new and makes no reference to the cut locus, and the `count' defined in Section 3 is a development of some ideas in \cite{tabachfuchs}. Throughout a $'$ will mean derivative w.r.t. argument. 

\setcounter{thm}{0}

\section{Evolutes of plane convex curves}

Let $\bg$ be a `plane oval', by which we mean a simple smooth ($C^\infty$) strictly convex plane curve (we take the signed curvature $k_\gamma$ of $\bg$ to be negative for the sake of orientations, see below and Figure \ref{figevo}), and let $\bbet$ be the evolute to $\bg$ (the envelope of lines normal to $\bg$, also known as the caustic). The evolute has the following well known properties (see Bruce and Giblin \cite{bruce} or Tabachnikov \cite{tabachfuchs}): $\bbet$ will be a closed bounded curve (since $k_\gamma\neq 0$); $\bbet$ consists of an even number of smooth `arcs' separated by cusps which correspond to the vertices of $\bg$ (ordinary cusps if the vertices of $\bg$ are simple); and $\bbet$ is locally convex, by which we mean for every regular $q\in\bbet$ there is a neighbourhood of $\bbet$ which lies entirely to one side of the tangent line to $\bbet$ at $q$. 

\subsection{Rotation index}

We use the local convexity to define a `standard' orientation on $\bbet$ as follows: at any regular point we move along $\bbet$ such that the tangent line is to the left of $\bbet$ (changing the definition but keeping the sense of Whitney \cite{whitney}, see the black arrows in Figure \ref{figevo}). Note this implies $\bg$ is oriented in the opposite sense to $\bbet$. After performing one circuit of $\bbet$ the tangent vector will have turned anticlockwise through an integer multiple of $2\pi$; we call this the rotation index $i$ of $\bbet$ (at cusps the tangent vector rotates by $+\pi$, see DoCarmo \cite{docarmo}). Note: we use the term `rotation index' in keeping with \cite{docarmo}, \cite{kuhnel}, \cite{lee}, however some authors use `index' \cite{arnold}, `rotation number' \cite{whitney}, \cite{tabachfuchs}, `winding number' \cite{mccairns}, \cite{tabach} or `turning number' \cite{berg}.

\begin{figure}
\includegraphics[width=0.5\textwidth]{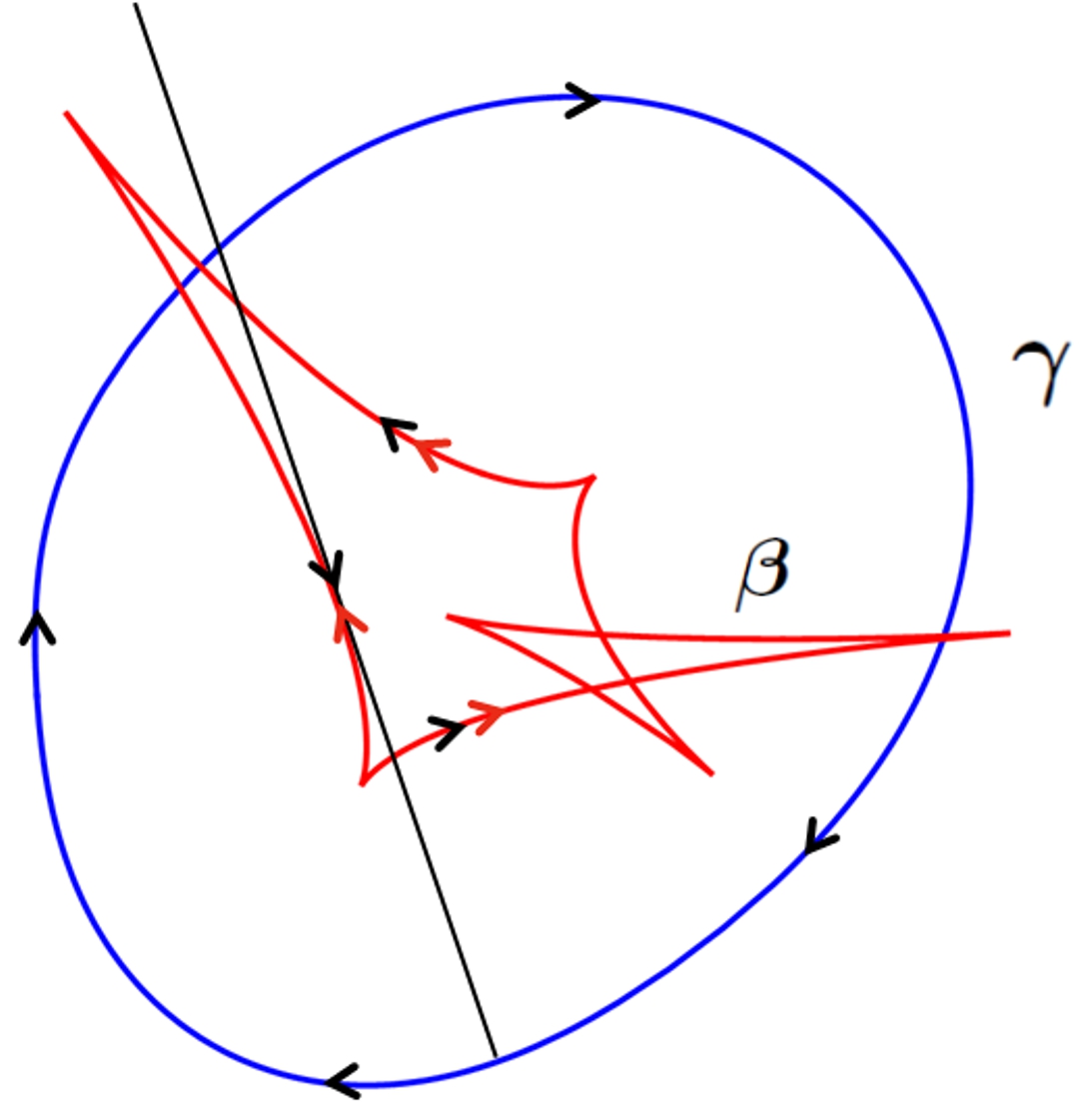}\caption{The orientations of $\bg$ (blue) and $\bbet$ (red) as defined in the text.}\label{figevo}
\end{figure}

\begin{lem}
A segment of $\bg$ between two vertices and the corresponding arc of $\bbet$ have total curvatures equal in magnitude but opposite in sign.
\end{lem}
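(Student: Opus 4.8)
The claim is that for a segment of the oval $\bg$ between two consecutive vertices, with corresponding arc of the evolute $\bbet$, the total curvatures (integrals of signed geodesic curvature with respect to arc length) are equal in magnitude and opposite in sign.

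Let me recall the standard parametrization. If $\bg(s)$ is parametrized by arc length with unit tangent $T$ and unit normal $N$, and signed curvature $k_\gamma$, then the evolute is $\bbet = \bg + \frac{1}{k_\gamma} N$ (the center of curvature). Differentiating: $\bbet' = \bg' + (1/k_\gamma)' N + (1/k_\gamma) N' = T + (1/k_\gamma)' N + (1/k_\gamma)(-k_\gamma T) = (1/k_\gamma)' N$. So $\bbet'$ is parallel to $N$, and the tangent direction to the evolute is $\pm N$.

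The arc length element of the evolute: $|\bbet'| = |(1/k_\gamma)'|$. Away from vertices (where $k_\gamma' = 0$, equivalently $(1/k_\gamma)' = 0$) this is nonzero. Between two consecutive vertices $(1/k_\gamma)'$ has constant sign; say it's positive on the segment (or we handle signs generally).

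The unit tangent to $\bbet$ is $\pm N$ depending on the sign of $(1/k_\gamma)'$. Then the curvature of $\bbet$: we need to compute $k_\beta$. Using that the tangent angle of $\bbet$ differs from the tangent angle of $\bg$ by $\pm\pi/2$ (a constant!), the turning of the tangent vector to $\bbet$ equals the turning of the tangent vector to $\bg$... wait, but with a reparametrization and possible orientation reversal.

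Here's the key: let $\theta(s)$ be the angle of $T = \bg'$. Then $k_\gamma = \theta'$. The tangent to $\bbet$ is $\pm N$, which has angle $\theta \pm \pi/2$. So as we traverse the $\bg$-segment, the $\bbet$-tangent angle changes by exactly the same amount $\int k_\gamma \, ds = \Delta\theta$ — as a *signed* quantity in terms of the $s$-parameter. But the total curvature of $\bbet$ is $\int k_\beta \, d\sigma$ where $\sigma$ is $\bbet$'s own arc length, traversed in $\bbet$'s own orientation. Now here's where the sign flip comes in: by the "standard orientation" convention defined just above the lemma, $\bbet$ is oriented *oppositely* to $\bg$. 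So traversing the arc of $\bbet$ in its standard orientation corresponds to traversing the segment of $\bg$ backwards, which reverses the sign of the total turning. Hence $\int_{\bbet\text{-arc}} k_\beta\, d\sigma = -\int_{\bg\text{-segment}} k_\gamma\, ds$.

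**Summary of steps.**

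First, write $\bbet = \bg + (1/k_\gamma)N$ and compute $\bbet' = (1/k_\gamma)' N$, establishing that the unit tangent of $\bbet$ is $\epsilon N$ where $\epsilon = \mathrm{sign}((1/k_\gamma)')$ is constant on the segment between consecutive vertices. Second, observe that the tangent angle of $\bbet$ is $\theta + \epsilon\pi/2$ (constant offset from $\theta$, the tangent angle of $\bg$), so $d(\text{angle of }\bbet) = d\theta = k_\gamma\,ds$; integrating over the segment gives the total curvature of the arc, computed with the *induced* orientation from $\bg$, equal to $\int k_\gamma\, ds$. Third, invoke the orientation convention from the text: the standard orientation of $\bbet$ is opposite to that of $\bg$, so the total curvature of the arc in its standard orientation picks up a minus sign. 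Conclude $\int_{\text{arc}} k_\beta = -\int_{\text{segment}} k_\gamma$, which is the claim. One should also check that the $\epsilon\pi/2$ offset is genuinely constant across the whole open segment (no sign change of $(1/k_\gamma)'$ in the interior, which holds precisely because vertices are the zeros of $k_\gamma'$), and note the cusp endpoints contribute nothing to the arc-length integral of curvature.

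**Expected main obstacle.** The real calculation is trivial; the only delicate point is bookkeeping the orientations so that the sign is genuinely $-1$ and not $+1$. I would be careful to pin down exactly how "standard orientation" (tangent line to the left) interacts with the $\epsilon = \pm 1$ factor and with the fact that $k_\gamma < 0$ by the paper's convention, and to make sure the statement is orientation-of-the-pair independent — i.e. that the relative sign is what's asserted regardless of which of the two possible orientations one started with on $\bg$. A clean way to phrase it is in terms of tangent-angle functions, avoiding any explicit appeal to a formula for $k_\beta$ at all.
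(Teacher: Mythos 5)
Your proof is correct, and it reaches the same one-line identity the paper uses ($\bbet'=(1/k_\gamma)'\bN=R'\bN$, so the evolute's tangent is the oval's normal), but you handle the decisive minus sign by a different mechanism. The paper simply quotes the signed formula $k_\beta=-1/(RR')$ from the literature and cancels $R'$ against $d\sigma=R'\,ds$, so the sign is imported wholesale from the reference's orientation conventions and never examined. You instead avoid any formula for $k_\beta$: you observe that the tangent angle of $\bbet$ differs from that of $\bg$ by the locally constant offset $\epsilon\pi/2$ (constant on each arc precisely because vertices are the only zeros of $(1/k_\gamma)'$), so the turning of the arc in the $s$-induced orientation equals $\int k_\gamma\,ds$ exactly, and the minus sign is then produced by the orientation reversal between the standard orientation of $\bbet$ and that of $\bg$. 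This buys a self-contained argument that makes explicit where the sign lives --- which is genuinely valuable here, since the lemma's statement is only meaningful once the orientations of the two curves are pinned down, and the paper's substitution $d\sigma=R'\,ds$ is itself only a signed (orientation-dependent) substitution. The one point you should nail down rather than assert is that the standard orientation of $\bbet$ (tangent line to the left) really does correspond to decreasing $s$ on \emph{every} arc, not just ``opposite rotational sense'' globally; this follows from local convexity plus the fact that the correspondence $s\mapsto\bbet(s)$ traverses the cusped curve monotonically, but it is exactly the kind of claim that deserves the one-line check you flag in your ``expected obstacle'' remark.
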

\begin{proof}
By `total curvature' we mean the arc-length integral of the signed curvature. Let $s$ and $\sigma$ be the arc-length parameters of $\bg$ and $\bbet$ respectively. It is well known \cite{tabach} that if $R=R(s)$ is the radius of curvature of $\bg$ then the curvature of $\bbet$ is $k_\beta=-1/(RR')$ and $d\bbet/ds=R'\bN$ where $\bN$ is the unit normal to $\bg$. Therefore \[ \int_{\sigma_1}^{\sigma_2}k_\beta d\sigma= \int_{s_1}^{s_2}\left(\frac{-1}{RR'}\right)R' ds=-\int_{s_1}^{s_2}\frac{1}{R} ds=-\int_{s_1}^{s_2}k_\gamma ds. \]
\end{proof}

\begin{thm}
Let $\bg$ be a plane oval and $\bbet$ its evolute. The rotation index of $\bbet$, denoted $i$, is given by \[ i=(n-2)/2 \] where $n$ if the number of cusps of $\bbet$ (or vertices of $\bg$).
\end{thm}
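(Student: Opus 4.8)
The plan is to compute the rotation index of $\bbet$ by splitting one circuit of $\bbet$ into its $n$ smooth arcs and accounting separately for the turning along each arc and the turning at each cusp. First I would fix the standard orientation on $\bbet$ described in the text, so that at regular points the curve turns to the left (i.e. its signed curvature $k_\beta$ is positive with respect to that orientation), and at each of the $n$ cusps the tangent vector jumps by $+\pi$. By the theorem of turning tangents applied to this (piecewise-smooth, closed) curve, the total turning is $2\pi i$, so I would write
\begin{equation*}
2\pi i=\sum_{\text{arcs}}\int_{\text{arc}}k_\beta\,d\sigma+\sum_{\text{cusps}}\pi=\sum_{\text{arcs}}\left|\int_{\text{arc}}k_\beta\,d\sigma\right|+n\pi,
\end{equation*}
the absolute values being justified because each arc is locally convex and hence traversed in a consistent (turning-left) sense under the standard orientation.

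Next I would evaluate the sum of arc-turnings using Lemma 1: the total curvature of the arc of $\bbet$ between two consecutive vertices equals the total curvature of the corresponding segment of $\bg$, equal in magnitude and opposite in sign. Summing over all $n$ segments of $\bg$ recovers the total curvature of the full oval $\bg$. Since $\bg$ is a simple closed convex curve, its total curvature has magnitude $2\pi$ (it is $-2\pi$ with the sign convention $k_\gamma<0$ adopted in the paper, but with the standard orientation of $\bbet$ the corresponding arcs of $\bbet$ contribute total turning of magnitude $2\pi$). Hence $\sum_{\text{arcs}}\left|\int_{\text{arc}}k_\beta\,d\sigma\right|=2\pi$, and substituting gives $2\pi i=2\pi+n\pi$. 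That would yield $i=(n+2)/2$, so I would need to be careful about orientations: the point is that $\bg$ and $\bbet$ are oppositely oriented (as noted in the text), so the arcs of $\bbet$, when traversed according to the standard orientation, sweep out total turning $-2\pi$ relative to the sense in which Lemma 1 matches them to $\bg$; reconciling this sign correctly gives $\sum_{\text{arcs}}\int k_\beta\,d\sigma=-2\pi$ as a signed quantity that must still be reinterpreted as $+2\pi$ minus the cusp contributions, leading after the bookkeeping to $2\pi i=-2\pi+n\pi$ and hence $i=(n-2)/2$.

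The main obstacle I expect is precisely this sign/orientation bookkeeping: Lemma 1 is stated for arcs of $\bbet$ with the arc-length parametrization inherited from $\bg$ via $d\bbet/ds=R'\bN$, but the standard orientation of $\bbet$ used to define the rotation index is the local-convexity orientation, and these need not agree on a given arc — in fact they reverse at each cusp, since $R'$ changes sign there. So I would track, arc by arc, whether the standard orientation agrees or disagrees with the $s$-parametrization, observe that the local convexity forces each arc's contribution to the total turning (in the standard orientation) to be $+\left|\int k_\beta\,d\sigma\right|$, and then use Lemma 1 together with $\oint_{\bg}k_\gamma\,ds=-2\pi$ to get $\sum\left|\int_{\text{arc}}k_\beta\,d\sigma\right|$. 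A clean way to sidestep the per-arc casework is to note that $\int_{\text{arc}}|k_\beta|\,d\sigma=\bigl|\int_{\text{arc}}k_\gamma\,ds\bigr|$ for each arc by Lemma 1 (since $k_\beta$ has constant sign on each arc), sum to get $2\pi$ only if no cancellation occurs — but cancellation is exactly what the vertices prevent, since $R'$ vanishes only at vertices, so $k_\gamma=1/R$ has no sign change and $\sum_{\text{arcs}}\bigl|\int k_\gamma\,ds\bigr|=\bigl|\oint k_\gamma\,ds\bigr|=2\pi$. Plugging this and the $n\pi$ cusp total into the turning-tangents identity, and finally fixing the overall sign by the known fact (or a direct check on the ellipse, where $n=4$ and $i=1$) that the regular arcs contribute $+2\pi$ rather than $-2\pi$ to the turning, yields $2\pi i=2\pi+ (n-4)\pi$... hence I must instead get the cusp jumps to be counted as the deficit they create, giving $2\pi i = n\pi - 2\pi$, i.e. $i=(n-2)/2$; I would present the argument so that the ellipse check pins down the single remaining sign and confirms the formula.
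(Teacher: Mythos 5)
Your decomposition is exactly the paper's: write the total turning $2\pi i$ as the sum of the signed turnings over the $n$ smooth arcs plus $n\pi$ from the ordinary cusps, and evaluate the arc sum via Lemma 1 as (up to sign) the total curvature of $\bg$, which has magnitude $2\pi$ since $\bg$ is a simple closed convex curve. The gap is in the single step you use to pin the signs down. You assert that under the standard orientation each arc "turns to the left", i.e.\ $k_\beta>0$, so that each arc contributes $+\bigl|\int_{\text{arc}}k_\beta\,d\sigma\bigr|$. This is backwards: the standard orientation is chosen so that the tangent line lies to the \emph{left} of $\bbet$, hence the curve lies to the right of its tangent line and bends to the right of its direction of travel, so $k_\beta<0$ on every arc and each contribution is $-\bigl|\int_{\text{arc}}k_\beta\,d\sigma\bigr|$. (The ellipse already forces this: $i=1$ and four cusps contributing $4\pi$ leave $-2\pi$ for the arcs.) With the sign corrected, the part of your argument that is genuinely right --- local convexity gives a \emph{common} sign on all arcs, and $k_\gamma$ never vanishes so there is no cancellation, whence $\sum_{\text{arcs}}\bigl|\int k_\beta\,d\sigma\bigr|=\bigl|\oint k_\gamma\,ds\bigr|=2\pi$ --- immediately yields $2\pi i=-2\pi+n\pi$. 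As written, however, your computation first produces $i=(n+2)/2$ and is then repaired by a chain of unjustified adjustments ("reinterpreted as $+2\pi$ minus the cusp contributions", "$2\pi i=2\pi+(n-4)\pi$", a cusp total that silently changes from $n\pi$ to $(n-4)\pi$). Deferring one global sign to an example check is legitimate \emph{only after} you have proved all arcs contribute with the same sign; your stated reason for that uniformity is the incorrect positivity claim, so the proof as proposed does not close.

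For comparison, the paper avoids the absolute-value bookkeeping entirely: it introduces the "alternating" orientation on $\bbet$ (transport the inward normal of $\bg$ along the normal line to its point of tangency with $\bbet$), whose tangent angle $\Phi$ is just the direction of the normal line to $\bg$ and therefore varies smoothly and monotonically by $-2\pi$ over one circuit; since the standard tangent $\bT$ equals $\pm\bt$ on each arc, the increments satisfy $\delta\phi=\delta\Phi$ on every arc regardless of the sign, so the arc sum is $-2\pi$ with no per-arc case analysis and no example needed to fix the sign. That device is the clean version of the orientation-tracking you correctly identified as the main obstacle.
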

\begin{proof}
The rotation index of $\bbet$ is defined by $2\pi i=\int d\phi$ where $\phi$ is the angle the unit tangent vector to $\bbet$ makes w.r.t. some fixed direction, thus \[ 2\pi i=\sum_{j=1}^n\int_{\sigma_j}^{\sigma_{j+1}}k_\beta d\sigma+n\pi \] where $j$ sums over the $n$ smooth arcs of $\bbet$ and the $n$ cusps of $\bbet$ are at $\sigma_j$. From Lemma 1 the first term on the right is just the total curvature of $\bg$ which is $-2\pi$ because $\bg$ is simple, smooth and oriented oppositely to $\bbet$. Therefore \be 2\pi i=-2\pi+n\pi\qquad \implies\qquad i=(n-2)/2. \label{iandn} \ee
\end{proof}

This simple formula puts restrictions on the form the evolute of a plane oval can take, for example if the evolute is simple it can only have 4 cusps (see Figure \ref{appics} for examples). For the benefit of the next Section we will rephrase this result in more general terms:

It is useful to imagine the tangent line to $\bbet$ rolling along the evolute in a smooth monotone way; this is due to the combination of local convexity of the smooth arcs and the ordinariness of the cusps. To make this more precise, let us define an `alternating' orientation on $\bbet$ by transporting the inward pointing normal to $\bg$ along the normal line to where it is tangent to $\bbet$ (red arrows in Figure \ref{figevo}), and let $\bt$ be the (unit) tangent vector to $\bbet$ w.r.t. this orientation. The oriented tangent line defined by $\bt$ makes an angle $\Phi$ w.r.t. some fixed direction, but importantly this angle varies in a smooth and monotone way as we move along $\bbet$ w.r.t. the standard orientation; in fact it changes by $-2\pi$ after one circuit of $\bbet$ (this is because $\Phi$ is simply the angle made by the normal to $\bg$, see Figure \ref{figevo}; this slightly overwrought description is necessary for later sections). 

The rotation index of $\bbet$ on the other hand is defined by the tangent vector w.r.t. the standard orientation. Let $\bT$ be this (unit) tangent vector and $\phi$ the angle it makes w.r.t. the same fixed direction. Since $\delta\phi=\pi$ at cusps then \[ 2\pi i=\sum_{j=1}^n(\delta\phi)_j+n\pi \] where $(\delta\phi)_j$ is the variation over the $j$th smooth arc. On the arcs where $\bT=\bt$ then $\delta\phi=\delta\Phi$, and on the arcs where $\bT=-\bt$ then $\phi=\Phi+\pi$ so again $\delta\phi=\delta\Phi$. Thus \[ 2\pi i=\sum_{j=1}^n(\delta\Phi)_j+n\pi=-2\pi+n\pi\quad \implies\quad i=(n-2)/2. \] In fact this is the case more generally, in the absence of any generating curve $\bg$ or local convexity (without $\bg$ some care is needed for point 2):

\begin{thm} \label{bnog}
Let $\bbet$ be a closed bounded piecewise-smooth planar curve with the following properties:\begin{enumerate}
\item $\bbet$ has an even number of smooth arcs separated by $n$ ordinary cusps, \item the alternating oriented tangent line completes a single clockwise rotation in traversing $\bbet$ w.r.t the standard orientation. \end{enumerate} Then the rotation index of $\bbet$ satisfies $i=(n-2)/2$. 
\end{thm}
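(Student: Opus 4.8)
The plan is to run the argument sketched just above the statement, but to manufacture intrinsically the two pieces of data that the oval $\bg$ previously supplied: a globally coherent unit tangent $\bt$ along $\bbet$ which does not flip at the cusps, and the fact that $\bt$ performs exactly one net rotation. I would begin by building $\bt$. Since $\bbet$ has ordinary cusps, its (unoriented) tangent line extends continuously across each cusp, so on each smooth arc I may pick a continuous unit tangent vector and then match these choices up continuously across the cusps; this produces a globally continuous unit vector $\bt$ along $\bbet$ provided the chain of matchings closes up. Across an ordinary cusp the standard-orientation unit velocity $\bT$ reverses, $u\mapsto -u$, whereas $\bt$ stays continuous, so the sign in $\bt=\pm\bT$ alternates from arc to arc; consistency around the closed curve therefore forces an even number of arcs, which is hypothesis (1). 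Granting it, $\bt$ is well-defined; let $\Phi$ be its angle relative to a fixed direction. As $\bt$ has no jumps (in particular none at the cusps), $\oint d\Phi$ is an integer multiple of $2\pi$, and hypothesis (2) says it is $-2\pi$.

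I would then compare $\bt$ with the standard-orientation tangent $\bT$, whose angle $\phi$ is smooth on each arc and gains $+\pi$ at each ordinary cusp (\cite{docarmo}), so that $2\pi i=\sum_{j=1}^{n}(\delta\phi)_j+n\pi$ with the sum over the $n$ smooth arcs. On any arc $\bT=\pm\bt$, hence $\phi$ and $\Phi$ differ there by the constant $0$ or $\pm\pi$, so $(\delta\phi)_j=(\delta\Phi)_j$. Since $\Phi$ is continuous on all of $\bbet$, $\sum_{j=1}^{n}(\delta\Phi)_j$ is precisely the total variation of $\Phi$ around $\bbet$, which is $-2\pi$. Therefore $2\pi i=-2\pi+n\pi$, that is $i=(n-2)/2$. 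Note that neither local convexity nor any monotonicity of $\Phi$ is used anywhere --- only the net rotation of $\bt$ enters the count.

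The step I expect to be the real obstacle is the first one, which is exactly the ``care needed for point 2'' that the text flags: without an ambient oval one must argue that a coherent alternating orientation exists at all (this is where hypothesis (1) does genuine work) and also fix the orientation conventions that make the net rotation of $\bt$ equal to $-2\pi$ rather than another multiple of $2\pi$, and that make each ordinary cusp contribute $+\pi$ rather than $-\pi$ to $\phi$. Once these conventions are in place the arithmetic of the second paragraph is immediate.
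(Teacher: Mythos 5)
Your proposal is correct and follows essentially the same route as the paper, which proves this statement in the paragraph immediately preceding it: compare the angle $\phi$ of the standard tangent $\bT$ with the angle $\Phi$ of the alternating tangent $\bt$, use $(\delta\phi)_j=(\delta\Phi)_j$ on each smooth arc and $+\pi$ at each ordinary cusp, and evaluate $\sum_j(\delta\Phi)_j=-2\pi$ from hypothesis (2). Your explicit intrinsic construction of $\bt$ without a generating curve $\bg$, and the observation that its global consistency is exactly what hypothesis (1) supplies, is precisely the `care needed for point 2' that the paper flags but does not spell out.
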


It just happens that the evolute of a plane oval satisfies these conditions. Actually we can generalize to evolutes of plane curves with $k_\gamma\neq 0$ that are {\it not} simple: the term $\sum(\delta\phi)_j$ is just the rotation of the normal line to $\bg$, so if $I$ is the rotation index of $\bg$ then \be i=(n+2I)/2. \label{iandiandn} \ee As an example consider the well-known evolute of a non-simple lima\c{c}on, where $i=-1,I=-2,n=2$ (see Figure \ref{appics} in the Appendix).

\bigskip

\par\noindent\rule{\textwidth}{0.4pt}

\bigskip

{\bf Note to the reader:} In the original version of this paper there was a Section 2.2 titled `no smooth loops', which claimed that it was not possible to have a smooth arc of the evolute intersecting itself. Following communication with colleagues in the Universitat Aut\`{o}noma de Barcelona (March 2025) it was shown that these claims were false. Further details, including counterexamples to the `no smooth loops' claim, are in Appendix B, but please note this does not effect the validity of the other results in this Section and the next.

\bigskip

\par\noindent\rule{\textwidth}{0.4pt}

\bigskip

\section{Conjugate locus}

Let $\mS$ be a smooth strictly convex surface, and let $p$ be a point in $\mS$. The conjugate locus of $p$ in $\mS$, denoted $C_p$, is the envelope of geodesics emanating from $p$; put another way, $C_p$ is the set of all points conjugate to $p$ along geodesics emanating radially from $p$. We choose $\mS$ to be strictly convex so every geodesic emanating from $p$ reaches a point conjugate to $p$ in finite distance; indeed if $K$ is the Gauss curvature of $\mS$ then (using Sturm's comparison theorem, see \cite{blaschke},\cite{kling}) a conjugate point must be reached no sooner than $\pi/\sqrt{K_{max}}$ and no later than $\pi/\sqrt{K_{min}}$ (max/min w.r.t. $\mS$ as a whole). Following Myers \cite{myers}, we make the following definitions: fixing a direction in $T_p\mS$, the (unit) tangent vector of a radial geodesic at $p$ makes an angle $\psi$ w.r.t. this direction and this radial geodesic reaches a conjugate point after a distance $R$. We call $R=R(\psi)$ the `distance function' (another term being the `focal radius', see \cite{wolter1}), and the polar curve it defines in $T_p\mS$ the `distance curve' (see Figure \ref{figexp}). The distance curve lies in the annulus of radii $(\pi/\sqrt{K_{max}},\pi/\sqrt{K_{min}})$ and is a smooth closed curve (star-shaped w.r.t. $p$); it is the exponential map of this curve (i.e. the conjugate locus) that is cusped (see Figure \ref{figexp}).

The following facts about the conjugate locus of a point $p$ are well known (see \cite{docarmoriemm} or \cite{TWbif} for example): letting $\bGamma_\psi(s)$ be the unit speed geodesic with $\bGamma_\psi(0)=p$ and $\bGamma_\psi '(0)$ making an angle $\psi$ w.r.t. some fixed direction in $T_p\mS$, we define the exponential map as $X:T_p\mS\to\mS:X(s,\psi)=\bGamma_\psi(s)$. The Jacobi field $\bmt{J}=\partial X/\partial\psi$ satisfies the Jacobi equation, which we can write in scalar form as follows (in general everything depends on $s,\psi$ but for brevity we only denote the functional dependence where necessary): letting $(\bT,\bN)$ be the unit tangent and normal to $\bGamma_\psi(s)$ we write $\bmt{J}=\xi\bN+\eta\bT$ and we can take $\eta=0$ w.l.o.g. (see \cite{TWbif}); then $\xi$ satisfies the scalar Jacobi equation \be \pfrac{^2\xi}{s^2}+K\xi=0 \label{jaco} \ee where $K$ is evaluated along the radial geodesic and the partial derivatives are to emphasise that Jacobi fields vary along radial geodesics but also from one radial geodesic to another. With initial conditions $\xi(0,\psi)=0,\xi_{,s}(0,\psi)=1$ the first non-trivial zero of $\xi$ is a point conjugate to $p$. The distance curve is therefore defined implicilty by $\xi(R(\psi),\psi)=0$, and the conjugate locus in $\mS$ is $X(R(\psi),\psi)$. Letting $\bbet(\psi):\mathbb{S}^1\to\mS$ denote this parameterisation of $C_p$, then \be \bbet'=R'(\psi)\bT(R(\psi),\psi). \label{tangcp} \ee When $R$ is stationary then $\bbet$ is not regular, it has a cusp (see Figure \ref{figexp}).

\begin{figure}
\begin{center}
\includegraphics[width=0.8\textwidth]{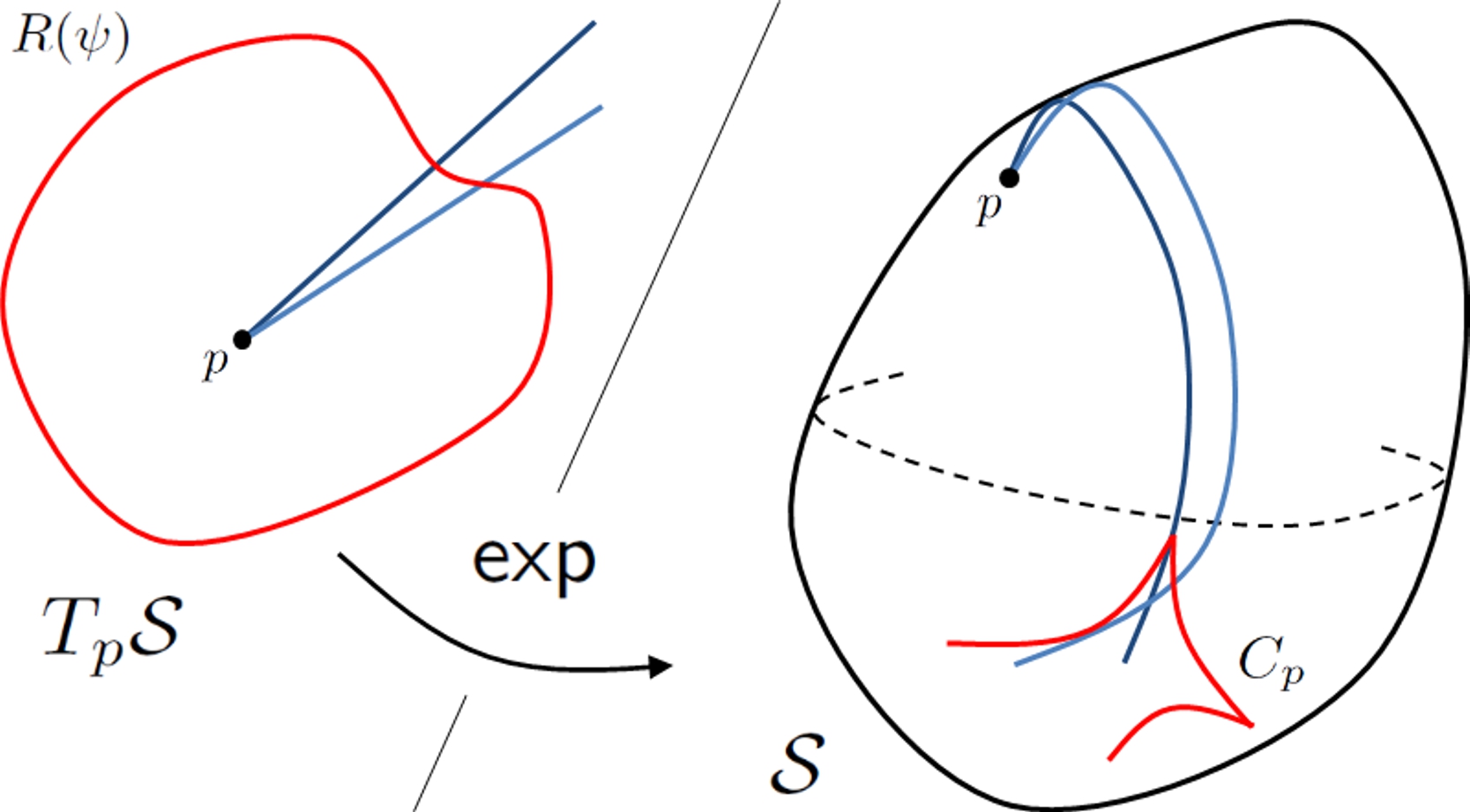}\caption{The distance curve in $T_p\mS$ and its image in $\mS$ under the exponential map: the conjugate locus of $p$, $C_p$.}\label{figexp}
\end{center}
\end{figure}

It is worth noting that the last expression tells us the following: $R$ is the arc-length parameter of $C_p$; the length of an arc of $C_p$ is simply the difference between consecutive stationary values of $R$ and since $R$ lives in an annulus this can bound the total length of $C_p$; if we give an alternating signed length to $C_p$ then its total length is zero; and finally the `string' construction of involutes carries over to conjugate loci (at $\psi_1$ move along a tangential geodesic a certain distance $\rho$, at $\psi_2$ move along a tangential geodesic a distance $\rho+R(\psi_2)-R(\psi_1)$; for some $\rho$ the involute will be a point, $p$).

\subsection{Rotation index}

In \cite{TWbif}, the author derived an expression for the local structure of the conjugate locus by considering the higher derivatives of the exponential map, and we reproduce the formula here: let $q$ be a point on the conjugate locus corresponding to $\psi=\psi_0$ and suppose $R$ has an $A_l$ singularity at $\psi_0$, by which we mean the first $l$ derivatives of $R$ vanish but not the $(l+1)$th. Then the leading behaviour in the parameterization of the conjugate locus at $\psi=\psi_0+\delta\psi$ is \be \bT\left[R^{(l+1)}\frac{\delta\psi^{l+1}}{(l+1)!}+\ldots\right]+\bN\left[(l+1)R^{(l+1)}\xi_{,s}\frac{\delta\psi^{l+2}}{(l+2)!}+\ldots\right]. \label{taylor} \ee We will return to the $\xi_{,s}$ term when we look at the geodesic curvature of $C_p$ (see \eqref{kgform}), for now we just need that $\xi_{,s}<0$. From this expression we see that if $R'\neq 0$ (i.e. an $A_0$ singularity of $R$) then locally $C_p$ is a parabola opening in the direction of $\bN$ if $R'<0$ and $-\bN$ if $R'>0$ (i.e. at regular points $C_p$ is locally convex), and if $R'=0$ (but $R''\neq 0$, an $A_1$ singularity) then $C_p$ has an ordinary (semicubical parabola) cusp. We will describe a point $p$ in $\mS$ as `generic' if the distance function $R$ has at most $A_1$ singularities.

It is well known that the rotation index of a curve on a surface can be defined by that of its preimage w.r.t. the coordinate patch of the surface which contains the curve, indeed Hopf's umlaufsatz does precisely that (see \cite{docarmo} or \cite{lee}). It is also well known that, with regards regular homotopy, every closed curve on a sphere has rotation index 0 or 1 \cite{mccairns}. For our purposes this is losing too much information about the curve, so (following Arnol'd \cite{arnold}) we define the rotation index of the conjugate locus $C_p$ as that on the surface $\mS$ with the point $p$ removed, which is equivalent to projecting the curve into a plane and finding the rotation index of that curve; we will essentially show this projected curve satisfies Theorem \ref{bnog}.

At the risk of confusing the reader we orient $\bbet$ in the following way: at $q\in\bbet$ we move in such a way that the radial geodesic from $p$ which has conjugate point at $q$ lies to the right of $\bbet$ w.r.t. the outward pointing normal of $\mS$ (this uses the local convexity of $\bbet$; alternatively $\psi$ increases in a clockwise sense when looking down on $p$, see Figure \ref{cps}); this is so the orientation of the projected curve agrees with the previous Section, and so $C_p$ on the ellipsoid has $i=1$. Also, if a radial geodesic from $p$ has conjugate point at $q$, we will refer to the portion $[p,q]$ as a `geodesic segment'.

\begin{thm}
Let $\mS$ be a smooth strictly convex surface and let $p$ be a generic point in $\mS$. Then the conjugate locus of $p$ satisfies $i=(n-2)/2$, where $i$ is the rotation index of the conjugate locus in $\mS/p$ and $n$ is the number of cusps.\end{thm}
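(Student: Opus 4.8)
The plan is to mirror the proof of Theorem~\ref{bnog}: project $C_p$ into the plane and verify that the projected curve satisfies the two hypotheses listed there. Since a conjugate point is reached no sooner than distance $\pi/\sqrt{K_{max}}>0$ from $p$, the locus $C_p$ lies in $\mS\setminus\{p\}$; as this punctured surface is diffeomorphic to a disc it carries a global chart $\phi\colon\mS\setminus\{p\}\to\mathbb{R}^2$, which I take orientation-compatible with the conventions fixed above. Writing $\bar\bbet=\phi\circ\bbet$, the rotation index $i$ of $C_p$ in $\mS/p$ equals that of $\bar\bbet$ by definition, and because $\phi$ is a diffeomorphism $\bar\bbet$ has the same arc/cusp structure as $C_p$ and inherits a well-defined alternating oriented tangent direction, namely $d\phi$ applied to $\bT(R(\psi),\psi)=\bGamma_\psi'(R(\psi))$, the forward unit tangent of the radial geodesic at its conjugate point, which by \eqref{tangcp} is (up to sign) tangent to $C_p$ and depends smoothly on $\psi$. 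So it suffices to verify hypotheses (1) and (2) of Theorem~\ref{bnog} for $\bar\bbet$.

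Hypothesis~(1) is the easy part. By \eqref{tangcp} a cusp occurs exactly where $R'=0$, and genericity forces $R$ to be non-constant with non-degenerate critical points; being the critical points of a smooth $2\pi$-periodic non-constant function they alternate between maxima and minima around $\mathbb{S}^1$, so their number $n$ is even and at least~$2$, dividing $C_p$ into $n$ smooth arcs. Putting $l=1$ in \eqref{taylor}, near such a point $\bbet$ has the form $\bT[\tfrac12R''\,\delta\psi^2+\cdots]+\bN[\tfrac13R''\xi_{,s}\,\delta\psi^3+\cdots]$ with $R''\neq0$ and $\xi_{,s}<0$, i.e. an ordinary (semicubical) cusp. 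The diffeomorphism $\phi$ carries all of this over to $\bar\bbet$ unchanged.

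Hypothesis~(2) is the crux. At parameter $\psi$ the alternating oriented tangent line of $\bar\bbet$ is the $\phi$-image of the radial geodesic $\bGamma_\psi$ oriented away from $p$; as $\psi$ traverses $\mathbb{S}^1$ once this geodesic makes one circuit about $p$, and the claim is that the angle $\Phi$ of its projected oriented tangent direction satisfies $\oint d\Phi=-2\pi$. To make this precise I would slide the point at which the tangent is read off from the conjugate radius $s=R(\psi)$ back along each radial geodesic to a small common radius $s=\epsilon$: the family $\psi\mapsto d\phi\big(\bGamma_\psi'((1-t)R(\psi)+t\epsilon)\big)$, $t\in[0,1]$, is a homotopy through nowhere-vanishing fields, so the total turning is unchanged, and at $t=1$ one is looking at $d\phi$ of the outward unit normal along the small geodesic circle $S_\epsilon(p)$ (Gauss lemma). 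This is a nowhere-zero field transverse to the simple closed plane curve $\phi(S_\epsilon(p))$, pointing consistently to the side away from $\phi(B_\epsilon(p)\setminus\{p\})$, so by Hopf's Umlaufsatz its turning is $\pm2\pi$, equal to $-2\pi$ with the orientations fixed above (the ones making $C_p$ on the ellipsoid have $i=1$). Then, exactly as in Theorem~\ref{bnog}, on each smooth arc the standard tangent of $\bar\bbet$ is $\pm$ the alternating tangent and so contributes the same turning, the $n$ ordinary cusps contribute $n\pi$, and the alternating tangent contributes $\oint d\Phi=-2\pi$ in all, so $2\pi i=-2\pi+n\pi$, that is $i=(n-2)/2$.

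The step I expect to be the main obstacle is hypothesis~(2), and within it the delicate point is that the sliding homotopy stays in $\mS\setminus\{p\}$, i.e. that no radial geodesic $\bGamma_\psi$ returns to $p$ at a distance in $(0,R(\psi))$; I expect this to follow from the curvature bound $K\ge K_{min}>0$ (which caps $R(\psi)$ at $\pi/\sqrt{K_{min}}$) together with a Sturm/Morse-index argument excluding an earlier geodesic loop at $p$, but it is the part that needs the most care, as does the bookkeeping of orientations throughout.
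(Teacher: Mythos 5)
Your overall strategy is the same as the paper's: transfer $C_p$ to the plane by a chart on $\mS/p$, check the two hypotheses of Theorem~\ref{bnog} for the image curve, and obtain the $-2\pi$ in hypothesis~(2) by relating the turning of the radial direction at the conjugate points to the turning of the outward radial direction along a small geodesic circle about $p$. Your verification of hypothesis~(1) via genericity and \eqref{taylor} matches the paper. The only structural difference is the mechanism for hypothesis~(2): the paper uses the specific projection $\Pi$ to a supporting plane and identifies the tangent line to $\ba=\Pi(\bbet)$ at $\Pi(q)$ with the (claimed unique) intersection of the projected geodesic segment $[p,q]$ with $\Pi(C_\epsilon)$, whereas you slide the tangent vector down each radial geodesic from $s=R(\psi)$ to $s=\epsilon$ and invoke homotopy invariance of the turning number.

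The difficulty you flag at the end is a genuine gap, and the repair you hope for is not available. Your homotopy requires $d\phi\bigl(\bGamma_\psi'(s)\bigr)$ to be defined for all $s\in[\epsilon,R(\psi)]$, i.e.\ it requires every geodesic segment to avoid $p$ in its interior, and this is \emph{false} on general convex surfaces: no Sturm or curvature argument can exclude a geodesic loop at $p$ shorter than the conjugate distance. Concretely, on the prolate spheroid $x^2/\epsilon^2+y^2/\epsilon^2+z^2=1$ the equator $z=0$ is a closed geodesic of length $2\pi\epsilon$ along which $K\equiv 1$, so for $p$ on the equator the radial geodesic along it first meets a conjugate point at distance $\pi$ but returns to $p$ after distance $2\pi\epsilon$, passing through $p$ roughly $1/(2\epsilon)$ times before $s=R(\psi)$; the short closed geodesic, and hence the phenomenon, persists under perturbation to nearby surfaces and to generic points on that geodesic. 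For such $(\mS,p)$ your homotopy leaves the chart and the argument as written breaks down. To your credit you have isolated the genuinely delicate point of the theorem: the paper's own step asserting that each projected geodesic segment meets $\Pi(C_\epsilon)$ at a \emph{unique} point rests on the same tacit assumption, so a fully rigorous proof of hypothesis~(2) needs an argument that tolerates geodesic segments returning to, and winding many times about, the base point; neither your version nor a literal reading of the paper's supplies one.
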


\begin{proof} We already know the local structure of the conjugate locus of a generic point $p$ in $\mS$: smooth arcs separated by ordinary cusps. Let $\Pi$ be the projection defined as follows: radially project from $p$ to the unique supporting plane of $\mS$ parallel to the one at $p$. Since $\mS$ is smooth and convex this projection $\Pi:\mS/p\to\mathbb{R}^2$ is a homeomorphism. Letting $\ba=\Pi(\bbet)$, then $\ba$ is a piecewise-smooth curve with an even number of smooth arcs separated by ordinary cusps. If, on completing one circuit of $\ba$, the tangent line turns through $2\pi I$, then from Section 2 we have $i=(n+2I)/2$ where $i$ is the rotation index of $\ba$ in $\mathbb{R}^2$ (and hence the rotation index of $\bbet$ in $\mS/p$) and $n$ is the number of cusps to $\ba$ (and also $\bbet$). The only thing left to show is that $I=-1$. 

Let $C_\epsilon$ be a small geodesic circle in $\mS$ centred on $p$ with radius $\epsilon$. $\Pi(C_\epsilon)$ will be a simple curve in $\mathbb{R}^2$ which approaches a (large) circle as $\epsilon\to 0$. The projection under $\Pi$ of a geodesic segment gives a semi-infinite curve in $\mathbb{R}^2$ which meets $\Pi(C_\epsilon)$ at a unique point. For each $q\in\bbet$, we identify the tangent line to $\ba\in\mathbb{R}^2$ at $q'=\Pi(q)$ with this point of intersection of $\Pi(C_\epsilon)$ and the projected geodesic segment $[p,q]$. This identification is one-to-one and continuous, and since we traverse $\Pi(C_\epsilon)$ once in rotating about $p$ then the tangent line to $\ba$ makes one rotation and hence $I=\pm 1$. A simple diagram will show $I=-1$ with the orientations defined previously.

\end{proof}

An immediate corollary to this theorem would be an alternative route to proving the Last Geometric Statement of Jacobi: if we could show the conjugate locus of a generic point on the ellipsoid has rotation index 1 then there must be precisely 4 cusps.

\subsection{Counting segments}

In this section we will describe a way of assigning a positive intger to the regions of $\mS$ (developing ideas in \cite{tabach}) in order to give a convenient method of calculating the rotation index of $C_p$. We will use this, and the results of the previous section, to prove the `vierspitzensatz': that the conjugate locus of a point on a convex surface must have at least 4 cusps. In a footnote, Myers \cite{myers} refers to Blaschke \cite{blaschke} (in German) who in turn credits Carath\'{e}odory with a proof of this statement (see also p.206 of \cite{georg}).

We define on the complement of $\bbet$ in $\mS$ the following (which we shall refer to as the `count'): \[ m(o):\mS/\{\bbet\cup p\}\to\mathbb{N}/0 \] as the number of geodesic segments through the point $o\in\mS/\bbet$ (we also exclude $p$ as the count is infinte there). The count is constant on each component of $\mS/\bbet$, it is never equal to zero (see \cite{myers}), it is equal to 1 in the component of $\mS/\bbet$ containing $p$ (see Figures \ref{figeul} and \ref{cps} for examples) and it is preserved under the projection of the previous Subsection. As $o$ passes from one region of $\mS/\bbet$ (or $\mathbb{R}^2/\ba$) to another the count changes according to the rules shown in Figure \ref{figcount}; we know this from the locally convex structure of $\bbet$ described in \eqref{taylor}.

\begin{figure}
\begin{center}
\includegraphics[width=0.8\textwidth]{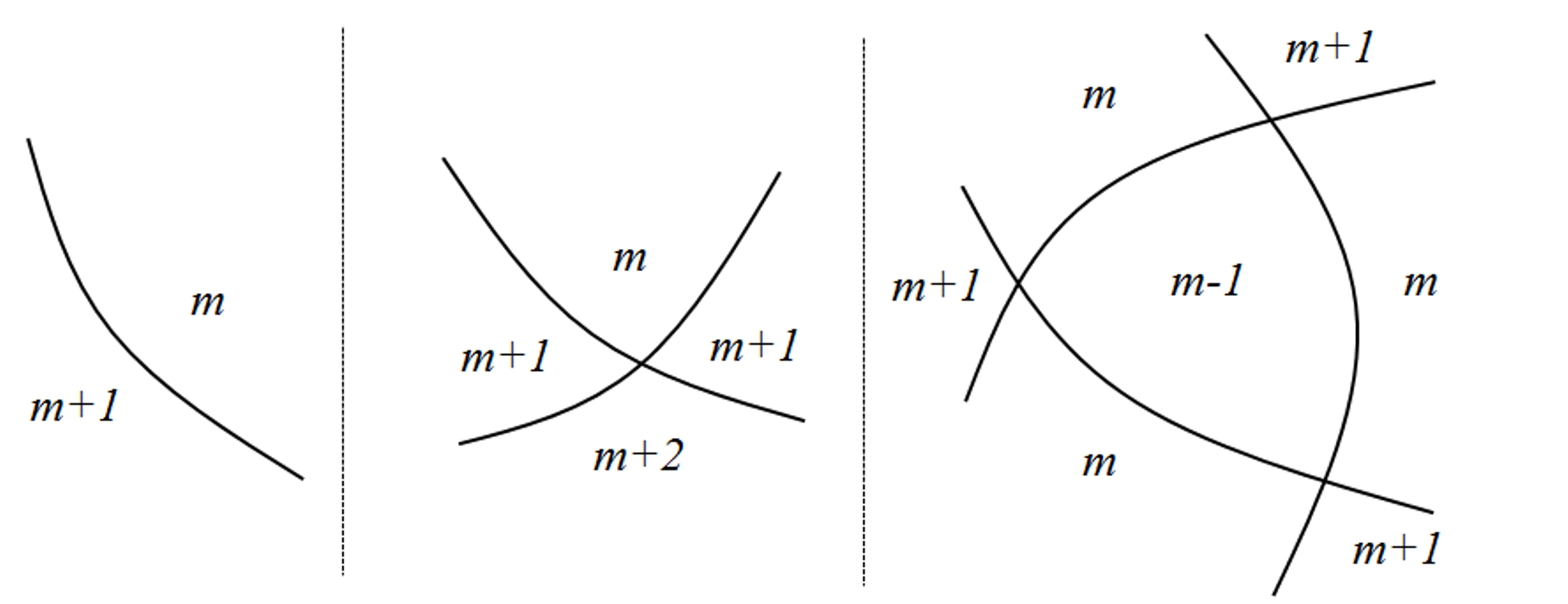}\caption{How the `count' changes as we pass from one component of $\mS/\bbet$ to another.}\label{figcount}
\end{center}
\end{figure}

We note another way of defining the count: the exponential map of the interior of the distance curve will cover $\mS$ and some points will be covered more than once; the number of times a point $o\in\mS/\bbet$ is covered by this map is $m(o)$. This alternative formulation may be useful in Gauss-Bonnet type constructions \cite{rogen}.

We can use the count to describe a method of calculating the rotation index of $\bbet$ due to McIntyre and Cairns \cite{mccairns}. In their paper they prove the following (we have adapted their statement to fit the current situation):

\begin{theorem} [McIntyre and Cairns] Assigning a number to each region of $\mS/\bbet$ according to the count, let $S_m$ be the union of the closure of the regions numbered $\geq m$, and $\chi_m$ the Euler characteristic of $S_m$. Then the rotation index of $\bbet$ in $\mS/p$ is \[ i= \sum_{m\geq 2} \chi_m.  \]
\end{theorem}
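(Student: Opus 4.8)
The plan is to deduce this from the original theorem of McIntyre and Cairns \cite{mccairns} for generic closed plane curves, transported to the present setting through the homeomorphism $\Pi:\mS/p\to\mathbb{R}^2$ of the proof of Theorem \ref{thmrot}, once the `count' has been identified with the winding--number function of the projected curve. In its original form their formula expresses the turning number of a generic closed plane curve as a signed sum of Euler characteristics of the super-- and sub--level sets of its winding--number function $w$; in the situation at hand $w$ will turn out to be everywhere nonnegative, so this reduces to $\sum_{k\geq1}\chi\big(\overline{\{w\geq k\}}\big)$. The curve $\ba:=\Pi(\bbet)$ is, after the smoothing of its ordinary cusps (which does not change the rotation index, see \cite{lee}), such a curve, and because $\Pi$ is a homeomorphism its turning number is exactly the rotation index $i$ of $\bbet$ in $\mS/p$; so everything comes down to computing $w$ for $\ba$.

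The central step is to show $w\circ\Pi=m-1$ on $\mS/\{\bbet\cup p\}$. For this I would use the second description of the count: $m(o)$ is the number of preimages of $o$ under $X$ restricted to the interior $D$ of the distance curve. This restriction has no critical points --- conjugate points lie on $\partial D$ --- so it is an orientation--preserving immersion; excising a small disc about the centre of $D$ and composing with $\Pi$ yields an orientation--preserving immersion of an annulus into $\mathbb{R}^2$ whose two boundary circles trace $\ba$ and one large simple loop. For such a map the number of preimages of a regular value is its mapping degree, which is read off the boundary as the sum of the two winding numbers, giving $m=w\circ\Pi\pm1$; evaluating in the component of $p$, where $m=1$ while $\Pi$ pushes nearby points out to infinity where $w=0$, fixes the sign as $+$. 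In particular $w\geq0$ since $m\geq1$ \cite{myers}. (Alternatively one can verify $m-1=w\circ\Pi$ directly, checking from the local structure behind Figure \ref{figcount} that the two functions jump by the same amount across every arc and cusp of $\bbet$ and agree on the $p$--component.)

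Finally, $\Pi$ being a homeomorphism under which the closed sets $S_{k+1}$ (which avoid $p$) correspond to $\overline{\{w\geq k\}}$, we get $\chi(S_{k+1})=\chi\big(\overline{\{w\geq k\}}\big)$ for all $k\geq1$; substituting into the McIntyre--Cairns formula and reindexing gives $i=\sum_{m\geq2}\chi_m$. I expect the main obstacle to be the central step --- showing cleanly that the count is precisely one more than the winding number of $\Pi(\bbet)$ --- which requires care at the puncture $p$, where $\Pi$ degenerates, and with the orientation conventions, together with the routine check that the hypotheses of the McIntyre--Cairns theorem hold for $\ba$ (which follows from genericity of $p$).
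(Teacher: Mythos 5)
Your proposal is correct and follows the route the paper itself takes: the paper states this result as a direct adaptation of McIntyre and Cairns' plane-curve formula, transported through the projection $\Pi$ of Theorem \ref{thmrot}, and offers no further written proof. Your identification $m=w\circ\Pi+1$ via the boundary degree of the exponential map on the annulus is precisely the detail needed to justify that adaptation, and your treatment of it (the sign check in the component of $p$, the nonnegativity of $w$, and the cusp-smoothing caveat) is sound.
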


\begin{figure}
{\includegraphics[width=0.25\textwidth]{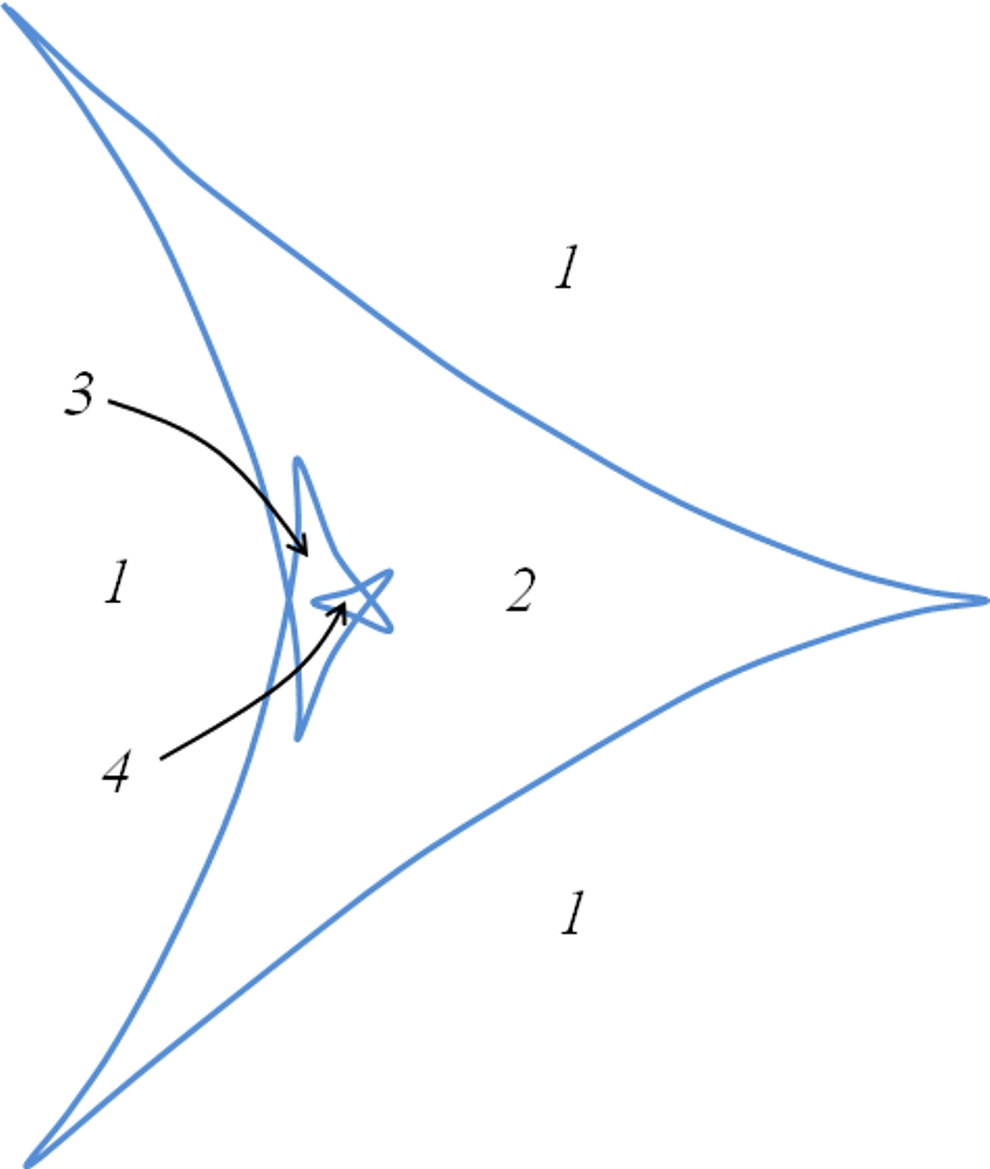}\includegraphics[width=0.25\textwidth]{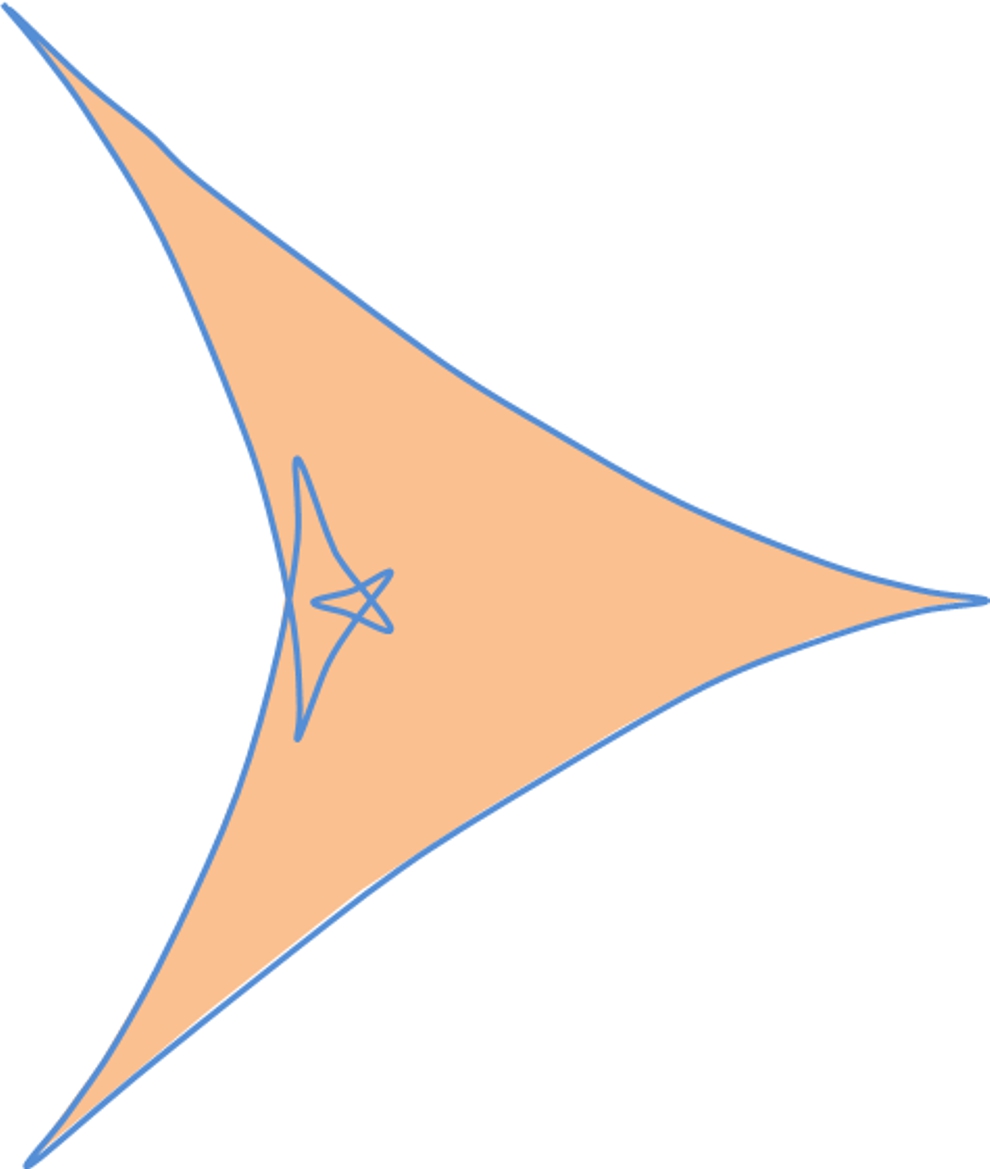}\includegraphics[width=0.25\textwidth]{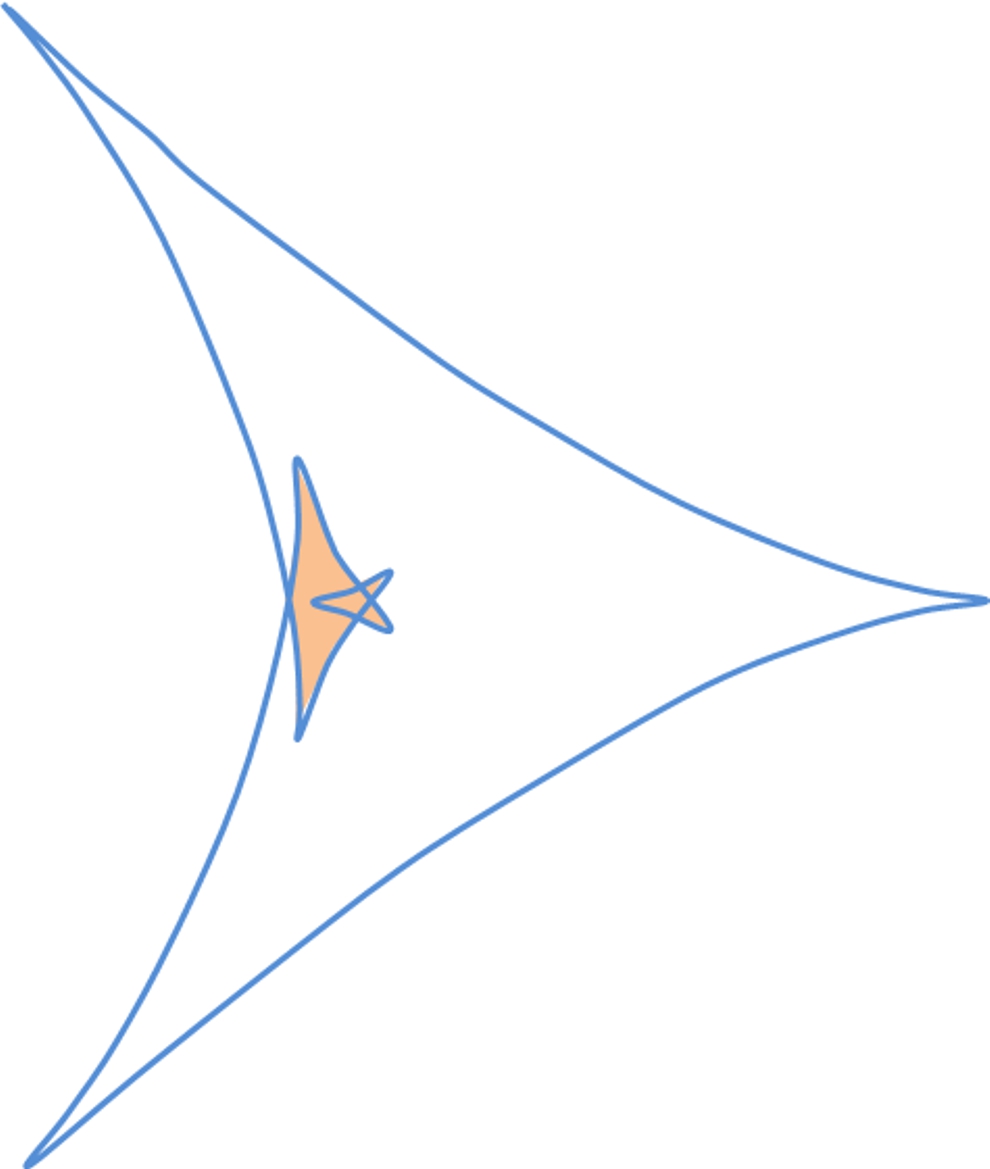}\includegraphics[width=0.25\textwidth]{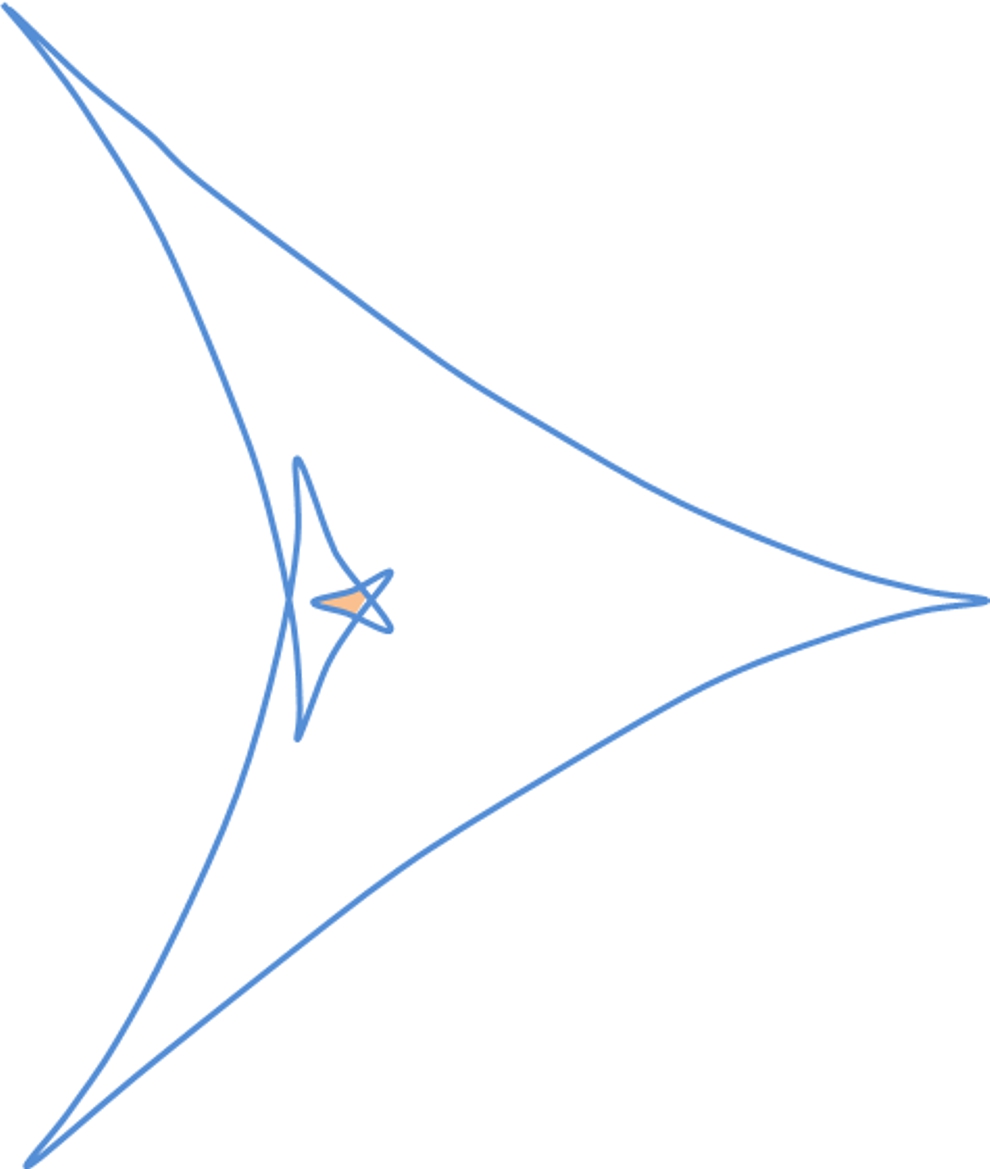}}\caption{On the far left a typical conjugate locus (taken from \cite{TWbif}) with the count marked in some regions; then shaded are $S_2, S_3$ and $S_4$, each of which are discs and have Euler charateristic 1. Thus the rotation index is 3, and the curve has 8 cusps as expected.}\label{figeul}
\end{figure}

The example in Figure \ref{figeul} is typical: the regions $S_m$ are usually discs or the disjoint union of discs, and hence have Euler characteristic $\geq 1$. If this were the case, the rotation index would be $\geq 1$ and hence $n$ could not be 2 via Theorem \ref{thmrot}.  Nonetheless, it is also possible that the $S_m$ are discs less discs, i.e. they could have `holes': 

\begin{lem}
For every hole in a region $S_m$, there is a disc.
\end{lem}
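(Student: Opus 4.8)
The plan is to exploit the "count" function $m$ together with the change rules in Figure~\ref{figcount}. The key observation is that a hole in $S_m$ is a maximal connected region (or union of regions) of $\mS/\bbet$ whose count is strictly less than $m$, but which is completely surrounded by regions of count $\geq m$. I would first set up the topological bookkeeping: the boundary $\partial(S_m)$ of a hole is a union of smooth arcs of $\bbet$ together with cusps, and one traverses this boundary loop as a closed curve in $\mS/p$. Since $m$ jumps by $\pm 1$ or $\pm 2$ across arcs of $\bbet$ according to the local-convexity rules of \eqref{taylor}, and since the count on the hole side is lower while on the $S_m$ side it is higher, the boundary of the hole must consist of arcs that are "concave toward the hole" --- precisely the opposite of the situation at the outer boundary of a disc component of $S_m$.

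The main step is then a local-to-global argument on this boundary loop. First I would classify the vertices (cusps of $\bbet$) appearing on $\partial(\text{hole})$ into the two types familiar from the count diagram: those where the two incident arcs both bound $S_m$ from the same side (so the boundary has a "beak" pointing into the hole, i.e. a $+\pi$ exterior-angle contribution) versus those pointing outward. Using that $\bbet$ is locally convex along each smooth arc and that the arcs bounding a hole curve the "wrong way," a Gauss--Bonnet / turning-angle count on the hole boundary forces at least some number of cusps of a definite type. Then I would argue that inside such a hole the count is still everywhere $\geq 1$ (by Myers, since every point lies on at least one geodesic segment), so the hole is itself a nonempty union of regions; the lowest count appearing inside the hole, call it $m' < m$, means the hole contains a region of $S_{m'}$, and I would track how the nesting of the $S_{m''}$ for $m'' \le m'$ inside the hole produces, after finitely many steps, an innermost disc. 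The cleanest formulation: restrict attention to the hole $H$, regarded as a closed sub-surface-with-boundary of $\mS$; its Euler characteristic is $1$ minus the number of its own holes, and an inductive descent on $m$ (there are only finitely many values of the count) terminates at a disc, which is the asserted disc "paired" to the original hole.

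The hard part will be making the pairing \emph{injective} --- i.e. ensuring that the disc produced for one hole is not reused for another --- and handling the case where a single region of $S_m$ has several holes or where holes are nested. I expect to resolve this by a careful accounting of Euler characteristics: writing $\chi_m = (\#\text{disc components of } S_m) - (\#\text{holes of } S_m)$ and showing that each hole of $S_m$ at level $m$ is "filled in" by a strictly-lower-indexed disc that is geometrically contained in that hole and hence distinct from discs associated to other holes (which lie in disjoint regions). The local convexity encoded in \eqref{taylor}, via the rules of Figure~\ref{figcount}, is what guarantees the count cannot drop to $0$ inside a hole and cannot oscillate pathologically, so the descent is well-founded; this is the geometric input that a purely topological statement about nested regions would lack.
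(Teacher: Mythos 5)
Your reduction of the problem to Euler-characteristic bookkeeping, $\chi_m=(\#\text{discs})-(\#\text{holes})$, is the right framing, and your observation that the count inside a hole is strictly smaller than $m$ agrees with the paper (which pins it down as $m-1$). But the inductive \emph{descent} you propose runs in the wrong direction, and this is a genuine gap rather than a presentational one. The sets $S_{m''}$ are nested decreasingly in $m''$ (i.e.\ $S_1\supseteq S_2\supseteq\cdots$), so following the count downward from a hole does not produce new, innermost regions: it produces ever larger sets, terminating at the count-$1$ component containing $p$, which contributes nothing to $\sum_{m\geq 2}\chi_m$. A hole of $S_m$ is simply ``filled in'' at level $m-1$, in the sense that $S_{m-1}$ has one fewer hole there; no compensating disc component is created at any level below $m$. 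The compensating disc must be found \emph{above} $m$, and your proposal contains no mechanism for producing a region of count greater than $m$.

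That mechanism is precisely the step your argument is missing: the boundary of a hole must contain a self-intersection point of $\bbet$ (because $\bbet$ is connected and its arcs are locally convex, so a hole cannot be bounded by an embedded, merely cusped, piece of the locus), and at a self-intersection point the rules of Figure \ref{figcount} force an adjacent sub-region of count $m+1$. Either the corresponding component of $S_{m+1}$ is a disc, or it has its own hole and the argument repeats at level $m+1$; since the count is bounded above, this \emph{ascent} terminates at a disc. Your Gauss--Bonnet/turning-angle classification of cusps on the hole boundary does not substitute for this self-intersection step. Your concern about injectivity of the hole-to-disc pairing is legitimate (the paper's own proof is silent on it), but it cannot even be addressed until the existence argument is made to go upward rather than downward.
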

\begin{proof}
Suppose there is a region $S_m$ which has a hole; the count inside the hole must be $m-1$ (see the right of Figure \ref{figcount} for an example). The boundary of the hole must contain points of self-intersection of $\bbet$, as $\bbet$ is connected, and the interior is star-shaped with repsect to any point in the interior, due to local convexity of $\bbet$. If there is a point of self-intersection then there must be a sub-region of $S_m$ with count $m+1$. Either this region is a disc, in which case we are done, or it also has a hole. If it has a hole, then applying the same reasoning there must also be a region with count $m+2$, and so on. Eventually there must be a region without a hole, which is the disc we require.
\end{proof} 

\begin{thm} Let $\mS$ be a smooth strictly convex surface and let $p$ be a generic point in $\mS$. The conjugate locus of $p$ must have at least 4 cusps.
\end{thm}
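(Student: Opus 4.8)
The plan is to combine Theorem~\ref{thmrot} with the McIntyre--Cairns formula and Lemma~3, exploiting the non-negativity that the local count structure forces on $\sum_{m\ge 2}\chi_m$. First I would record the elementary constraints on $n$. As already observed, the conjugate locus of a generic $p$ consists of an even number of smooth arcs separated by the same number of ordinary cusps, so $n$ is even; moreover the distance function $R$ is smooth, $2\pi$-periodic and (for a generic point) non-constant, so it has at least one maximum and one minimum, hence at least two $A_1$ critical points, and therefore $n\ge 2$. The whole theorem thus reduces to excluding the case $n=2$.

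So suppose $n=2$. By Theorem~\ref{thmrot} the rotation index is $i=(n-2)/2=0$, and the McIntyre--Cairns theorem gives $\sum_{m\ge 2}\chi_m=0$. I would obtain a contradiction by showing $\sum_{m\ge 2}\chi_m\ge 1$. The first step is that $S_2\neq\emptyset$: the component of $\mS/\bbet$ containing $p$ has count $1$, its boundary necessarily contains a non-degenerate piece of a smooth arc of $\bbet$, and on crossing that arc the count changes according to the rule of Figure~\ref{figcount}; since the count is never $0$ (Myers), the region on the far side has count $\ge 2$. Because $\mS$ is a topological sphere, every $S_m$ is a compact subsurface of genus $0$, i.e. a disjoint union of discs-with-holes, and $S_2\supseteq S_3\supseteq\cdots\supseteq S_M$ (with $M=\max m$) is a non-empty nested chain.

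The core of the proof is a bookkeeping of Euler characteristics via Lemma~3. Summing over all levels $m\ge 2$, let $\mathcal D$, $\mathcal N$, $\mathcal H$ denote the number of components of the $S_m$ that are discs, the number that are not, and the total number of holes. Since $\chi=\#\{\text{components}\}-\#\{\text{holes}\}$ for planar surfaces, $\sum_{m\ge 2}\chi_m=\mathcal D+\mathcal N-\mathcal H$, and clearly $\mathcal H\ge\mathcal N$ (and $\mathcal N=0\Rightarrow\mathcal H=0$). Lemma~3, iterated, attaches to each hole $H$ of each $S_m$ a disc component of some $S_{m'}$ with $m'>m$ that lies inside $H$; I would check that this assignment is injective, the point being that if a single disc lay inside a hole $H_1$ of $S_{m_1}$ and a hole $H_2$ of $S_{m_2}$ with $m_1<m_2$, then the count just inside $H_2$ is $m_2-1\ge m_1$, so $H_2\subseteq S_{m_1}$, whereas $H_1$ is disjoint from $S_{m_1}$ --- impossible --- while distinct holes of a fixed $S_m$ are disjoint. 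Hence $\mathcal H\le\mathcal D$. Now if $\mathcal N\ge 1$ then $\sum_{m\ge 2}\chi_m=\mathcal D+\mathcal N-\mathcal H\ge\mathcal N\ge 1$; and if $\mathcal N=0$ then $\mathcal H=0$ and $\sum_{m\ge 2}\chi_m=\mathcal D\ge 1$ because $S_2\ne\emptyset$. Either way $i\ge 1$, contradicting $i=0$; therefore $n\ne 2$, and since $n$ is even and $\ge 2$, we conclude $n\ge 4$.

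I expect the delicate point to be this final bookkeeping: turning Lemma~3 into a genuinely injective hole-to-disc correspondence compatible across all levels $m$ simultaneously. (The argument above is short, but it rests on the precise meaning of ``hole'' and ``disc component'' and on the fact --- a consequence of the locally convex structure in \eqref{taylor} --- that the count just inside a hole of $S_m$ equals $m-1$.) A lesser point that still needs care is verifying $S_2\ne\emptyset$, i.e. checking that the count-change rule of Figure~\ref{figcount} forces the count to increase, rather than drop to the forbidden value $0$, as one leaves the region of $p$ across one of its bounding arcs.
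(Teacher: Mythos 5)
Your proposal is correct and follows essentially the same route as the paper: reduce to excluding $n=2$ via Theorem \ref{thmrot} and the parity of the number of stationary points of $R$, then use the McIntyre--Cairns formula together with Lemma 3 to show $\sum_{m\geq 2}\chi_m\geq 1$. The only difference is that you make explicit the hole-to-disc bookkeeping (the injectivity of the assignment and the case $\mathcal{N}=0$) which the paper compresses into the single assertion that Lemma 3 forces the sum of Euler characteristics to be at least $1$.
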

\begin{proof}
According to Theorem \ref{thmrot}, we must rule out the case of $\bbet$ having rotation index 0 (if $p$ is generic then $R$ must have an even, and non-zero, number of stationary points which rules out $n=0,1,3$). But by Lemma 3 the sum of the Euler characteristics of the regions $S_m$ must be $\geq 1$, and hence the rotation index cannot be 0 and the number of cusps cannot be 2.
\end{proof}

\begin{figure}
{\includegraphics[width=0.45\textwidth]{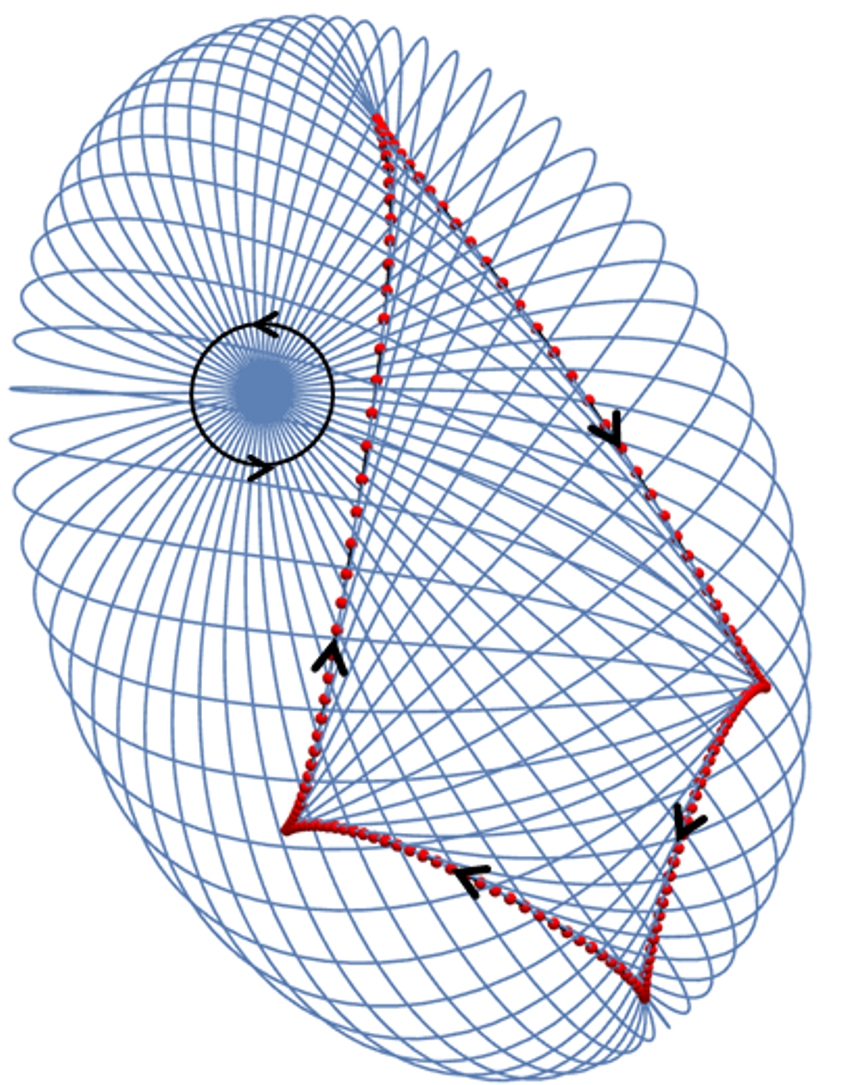}\hspace{1cm}\includegraphics[width=0.45\textwidth]{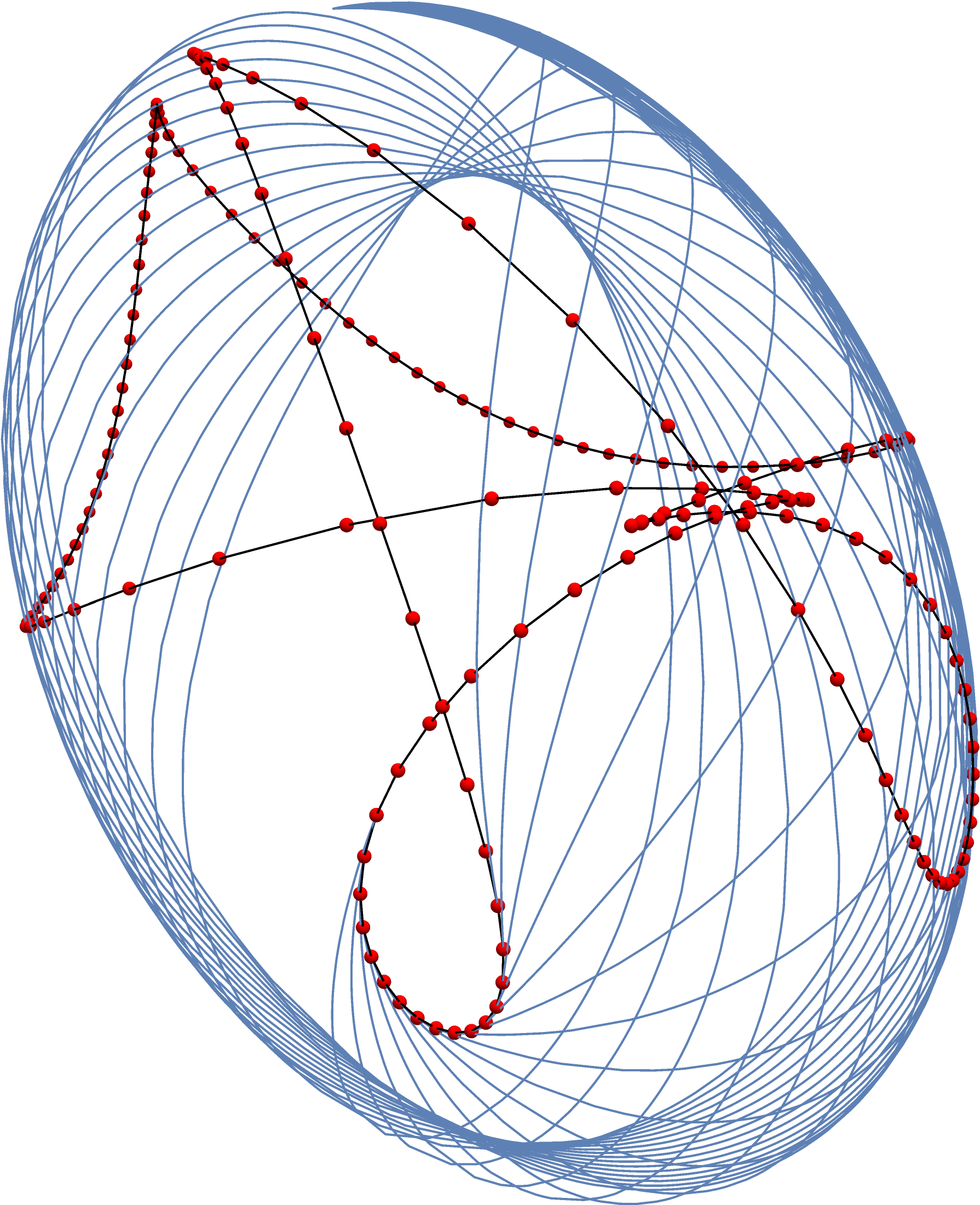}}\caption{Examples of $C_p^1$ (left) and $C_p^3$ (right) on the triaxial ellipsoid. Note the four cusps on the left as predicted by the Itoh Kiyohara theorem, and the smooth loop on the right. Blue lines are geodesics, black lines are the conjugate locus and the red dots are conjugate points.}\label{cps}
\end{figure}

\subsection{Geodesic curvature}

As it is relevant for this discussion, but seems to be missing from the literature, we derive an expression for the geodesic curvature of the conjugate locus. Recall from \eqref{tangcp} that the unit tangent vector to $\bbet(\psi)$ is $\bT(R(\psi),\psi)=\tfrac{\partial X}{\partial s}(R(\psi),\psi)$, so the $\psi$ derivative is \[ \frac{D}{d\psi}\bT(R(\psi),\psi)=\frac{D\bT}{\partial s}R'+\frac{D}{\partial \psi}\pfrac{X}{s}=\frac{D}{\partial s}\pfrac{X}{\psi}=\frac{D}{\partial s}\bmt{J} \] since $D\bT/\partial s=0$ by definiton of a geodesic. If $\bmt{J}=\xi\bN$ then $D\bmt{J}/\partial s=\xi_{,s}\bN$. Now letting $\sigma$ be the arc-length of $\bbet$ the Frenet equation for geodesic curvature \cite{kuhnel} gives \[ \frac{D}{d\sigma}\bT=\frac{d\psi}{d\sigma}\frac{D}{d\psi}\bT=\frac{1}{R'}\xi_{,s}\bN=k_g\bN \] and thus \be k_g(\psi)=\frac{\xi_{,s}(R(\psi),\psi)}{R'(\psi)}. \label{kgform} \ee This explains the appearance of the $\xi_{,s}$ term in \eqref{taylor}. We note that $\xi_{,s}(R(\psi),\psi)$ is never zero (and hence $k_g$ is never zero): since $R$ is defined by $\xi(R(\psi),\psi)=0$ then $\xi_{,s}(R(\psi),\psi)=0$ and the Jacobi equation \eqref{jaco} would imply $\xi\equiv 0$. Rather since $\xi_{,s}(0,\psi)=1$ and $s=R(\psi)$ is the {\it first} non-trivial zero of $\xi$ we have $\xi_{,s}(R(\psi),\psi)<0$. The formula applies equally well to $C_p^j$ (the locus of $j$th conjugate points to $p$) if we simply adapt $R$ to be the $j$th non-trivial zero of $\xi$ (note the sign of $\xi_{,s}$ will alternate from $C_p^j$ to $C_p^{j+1}$). Another expression for the geodesic curvature is found by differentiating $\bmt{J}(R(\psi),\psi)=0$ w.r.t. $\psi$ (see the `focal differential equation' in \cite{wolter1}) to give $k_g=-\xi_2(R(\psi),\psi)/(R')^2$ (see \cite{TWbif} for a definition of $\xi_2$). 

Note the sign of $k_g$ in \eqref{kgform} alternates between positive and negative depending on whether $R$ is decreasing or increasing on an arc of $C_p$, whereas using the orientation defined before Theorem \ref{thmrot} in Section 3.1 $k_g$ is strictly positive; this is because \eqref{kgform} is derived w.r.t. the tangents to radial geodesics which define an alternating orientation on $C_p$, similar to that described in Section 2.1. 

While the geodesic curvature in \eqref{kgform} diverges when $R$ is stationary (as we already knew), the total curvature \[ \int k_g d\sigma=\int\left(\frac{\xi_{,s}}{R'}\right)R'd\psi=\int\xi_{,s}(R(\psi),\psi)d\psi \] does not depend explicitly on $R$. On the sphere $\xi_{,s}(R(\psi),\psi)=-1$, so on a perturbed sphere we might expect the total curvature of an arc of the conjugate locus to be close to the difference in the values of $\psi$ at which $R$ is stationary which suggests an upper bound on $\int k_g d\sigma$, however moving away from the sphere it seems the arcs of the conjugate locus can become very curved indeed.

\section{Conclusions}

We cannot simply draw a plane curve with an even number of arcs and cusps and claim it to be the evolute of a plane oval; there are restrictions both geometrical (local convexity, sum of the alternating lengths vanishing) and topological (rotation index), as we have shown. In the same way we have shown that there are restrictions on the geometry and topology of the conjugate locus of a generic point on a smooth strictly convex surface, and these restrictions lead to structure which can be exploited. 

There are a number of generalizations of the results in this paper which are worth considering. We have already alluded to the `higher' conjugate locus $C_p^j$, and it is possible there is a formula generalizing \eqref{iandnorig}. Also we remind the reader that the conjugate locus of $p$ is the envelope of geodesics normal to the geodesic circle centred on $p$ (in this context the term `caustic' may be more appropriate). For small radius the geodesic circles are simple, and it is possible the caustics of simple curves on convex surfaces also satisfy \eqref{iandnorig} whereas nonsimple curves lead to \eqref{iandiandn} (possibly with the requirement the curves themselves have $k_g\neq 0$), however initial experiments suggest this context to be surprisingly complicated. Finally we have only considered convex surfaces however it is likely expressions such as \eqref{iandnorig} carry over to spheres with $K<0$ as long as the regions of negative Gauss curvature do not contain any closed geodesics (for example the spherical harmonic surfaces of \cite{TWspherical}, as opposed to the `dumbell' of \cite{Sinclair3}).

\bibliographystyle{plain}

\bibliography{clbib}

\begin{thebibliography}{10}

\bibitem{arnold}
V.~I. Arnol'd.
\newblock The geometry of spherical curves and the algebra of quaternions.
\newblock {\em Russian Math. Surveys}, 50(1):1--68, 1995.

\bibitem{berg}
M.~Berger and B.~Gostiaux.
\newblock {\em Differential Geometry: manifolds, curves, and surfaces}.
\newblock Springer-Verlag, 1988.

\bibitem{blaschke}
W.~Blaschke.
\newblock {\em Vorlesungen {\"u}ber Differentialgeometrie, I: Elementare
  Differentialgeometrie}.
\newblock Springer, 1930.

\bibitem{Bonnard3}
B.~Bonnard and J.-B. Caillau.
\newblock Geodesic flow of the averaged controlled {K}epler equation.
\newblock {\em Forum Mathematicum}, 21(5):797--814, 2009.

\bibitem{Bonnard5}
B.~Bonnard, J.-B. Caillau, R.~Sinclair, and M.~Tanaka.
\newblock Conjugate and cut loci of a two-spere of revolution with application
  to optimal control.
\newblock {\em Ann. Inst. Henri Poincar\'{e} (C)}, 26(4):1081--1098, 2009.

\bibitem{bruce}
J.~W. Bruce and P.~J. Giblin.
\newblock {\em Curves and singularities}.
\newblock Cambridge University Press, 1992.

\bibitem{docarmo}
M.~P. DoCarmo.
\newblock {\em Differential Geometry of Curves and Surfaces}.
\newblock Prentice Hall, 1976.

\bibitem{docarmoriemm}
M.~P. DoCarmo.
\newblock {\em Riemannian Geometry}.
\newblock Birkh\"{a}user, 1992.

\bibitem{tabachfuchs}
D.~Fuchs and S.~Tabachnikov.
\newblock {\em Mathematical Omnibus: thirty lectures on classic Mathematics}.
\newblock American Mathematical Society, 2010.

\bibitem{georg}
M.~Georgiadou.
\newblock {\em Constantin Carath{\'e}odory; mathematics and politics in
  turbulent times}.
\newblock Springer, 2004.

\bibitem{Itoh1}
J.-I. Itoh and K.~Kiyohara.
\newblock The cut loci and the conjugate loci on ellipsoids.
\newblock {\em Manuscripta Mathematica}, 114(2):247--264, 2004.

\bibitem{Itoh3}
J.-I. Itoh and K.~Kiyohara.
\newblock The cut loci on ellipsoids and certain {L}iouville manifolds.
\newblock {\em Asian J. Math.}, 14(2):257--290, 2010.

\bibitem{Itoh2}
J.-I. Itoh and K.~Kiyohara.
\newblock Cut loci and conjugate loci on {L}iouville surfaces.
\newblock {\em Manuscripta Mathematica}, 136(1-2):115--141, 2011.

\bibitem{jacobi}
C.~G.~J. Jacobi.
\newblock {\em Vorlesungen \"{u}ber dynamik}.
\newblock Gehalten an der Universit\"{a}t zu K\"{o}nigsberg im Wintersemester
  1842-1843 und nach einem von C. W. Borchart ausgearbeiteten hefte. hrsg. von
  A. Clebsch.

\bibitem{kling}
W.~Klingenberg.
\newblock {\em Riemannian Geometry}.
\newblock De Gruyter, 1995.

\bibitem{kuhnel}
W.~K\"{u}hnel.
\newblock {\em Differential Geometry: curves - surfaces - manifolds}.
\newblock American Mathematical Society, 2006.

\bibitem{lee}
J.~M. Lee.
\newblock {\em Riemannian Manifolds: an introduction to curvature}.
\newblock Springer-Verlag, 1997.

\bibitem{mccairns}
M.~McIntyre and G.~Cairns.
\newblock A new formula for winding number.
\newblock {\em Geometriae Dedicata}, 46:149--160, 1993.

\bibitem{myers}
S.~B. Myers.
\newblock Connections between differential geometry and topology, {I}: simply
  connected surfaces.
\newblock {\em Duke Math. J.}, 1(3):376--391, 1935.

\bibitem{poincare}
H.~Poincar\'{e}.
\newblock Sur les lignes geod\'{e}siques des surfaces convexes.
\newblock {\em Trans. Am. Math. Soc.}, 17:237--274, 1905.

\bibitem{rogen}
P.~R{\o}gen.
\newblock Gauss-{B}onnet's theorem and closed {F}renet frames.
\newblock {\em Geometriae Dedicata}, 73:295--315, 1998.

\bibitem{Sinclair1}
R.~Sinclair.
\newblock On the last geometric statement of {J}acobi.
\newblock {\em Experimental Mathematics}, 12(4):477--485, 2003.

\bibitem{Sinclair3}
R.~Sinclair and M.~Tanaka.
\newblock The cut locus of a two-sphere of revolution and {T}oponogov's
  comparison theorem.
\newblock {\em Tohoku Math. J.}, 59:379--399, 2007.

\bibitem{tabach}
S.~Tabachnikov.
\newblock {\em Geometry and Billiards}.
\newblock American Mathematical Society, 2005.

\bibitem{wolter1}
H.~Thielhelm, A.~Vais, D.~Brandes, and F-E. Wolter.
\newblock Connecting geodesics on smooth surfaces.
\newblock {\em Vis. Comput.}, 28(6-8):529--539, 2012.

\bibitem{wolter2}
H.~Thielhelm, A.~Vais, and F-E. Wolter.
\newblock Geodesic bifurcation on smooth surfaces.
\newblock {\em Vis. Comput.}, 31(2):187--204, 2015.

\bibitem{TWspherical}
T.~J. Waters.
\newblock Regular and irregular geodesics on spherical harmonic surfaces.
\newblock {\em Physica D: Nonlinear Phenomena}, 241(5):543--552, 2012.

\bibitem{TWbif}
T.~J. Waters.
\newblock Bifurcations of the conjugate locus.
\newblock {\em Journal of Geometry and Physics}, 119:1--8, 2017.

\bibitem{whitney}
H.~Whitney.
\newblock On regular closed curves in the plane.
\newblock {\em Compositio Mathematica}, 4:276--284, 1937.

\end{thebibliography}

\appendix

\section{Additional diagrams}

\begin{figure}[h]
{\includegraphics[height=0.175\textheight]{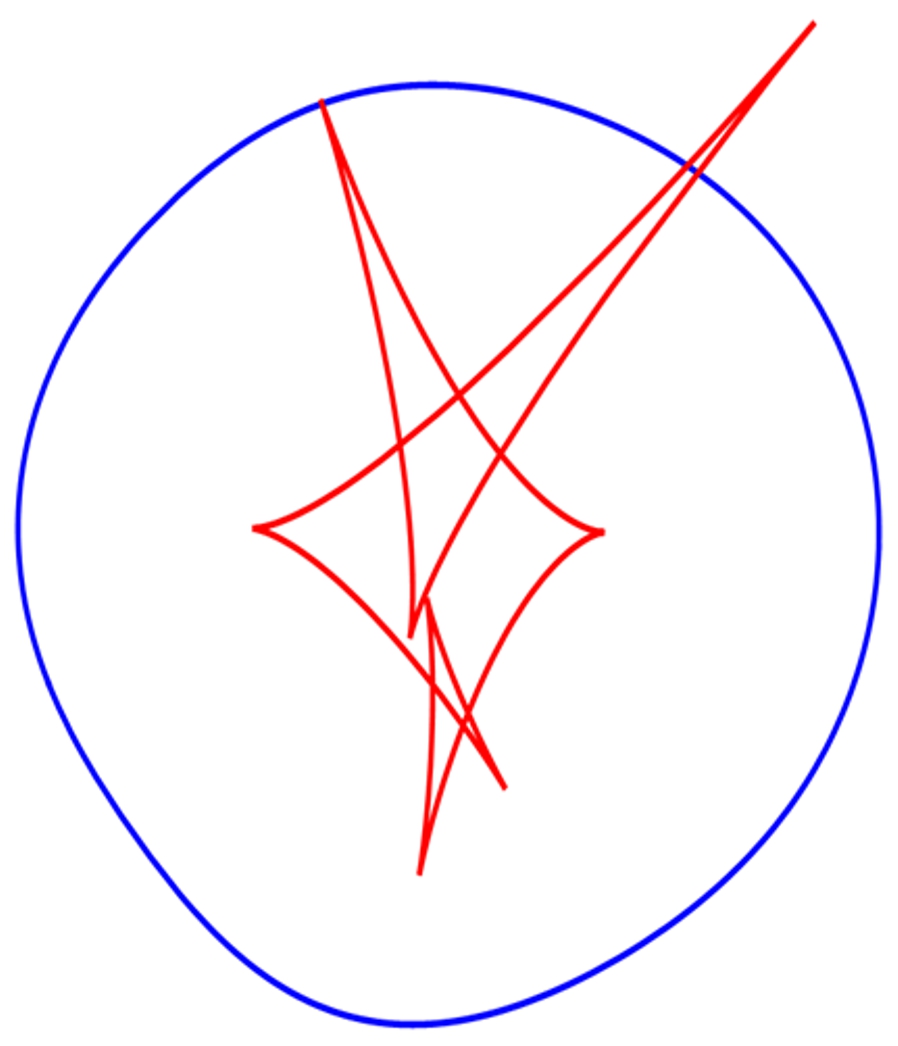}\hspace{0.2cm}\includegraphics[height=0.175\textheight]{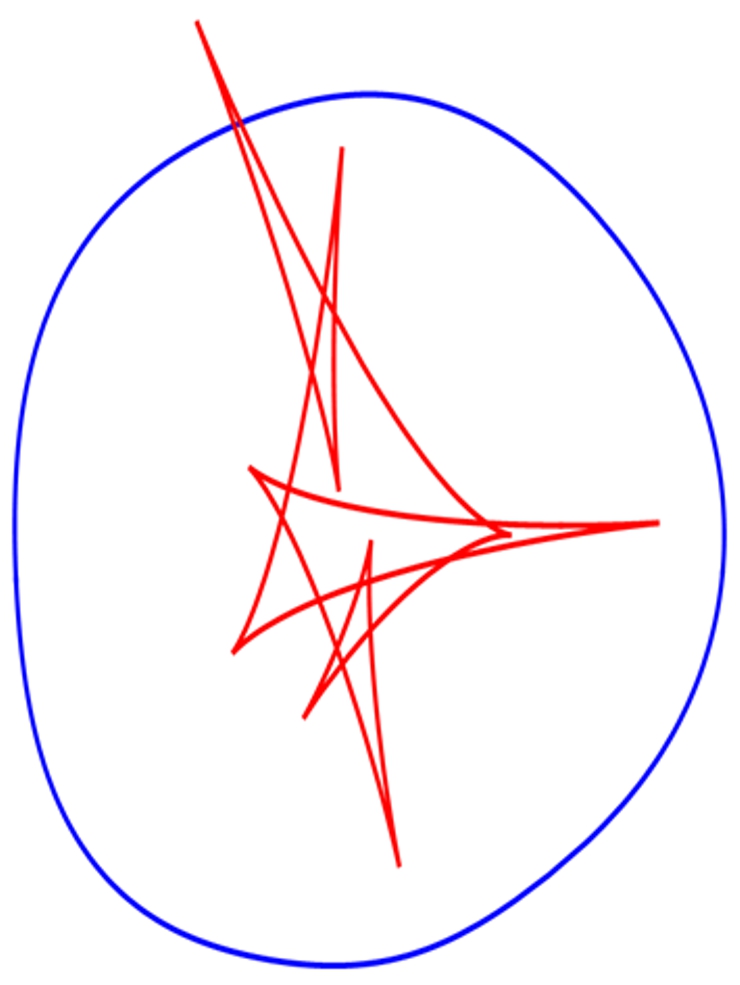}\hspace{0.2cm}\includegraphics[height=0.175\textheight]{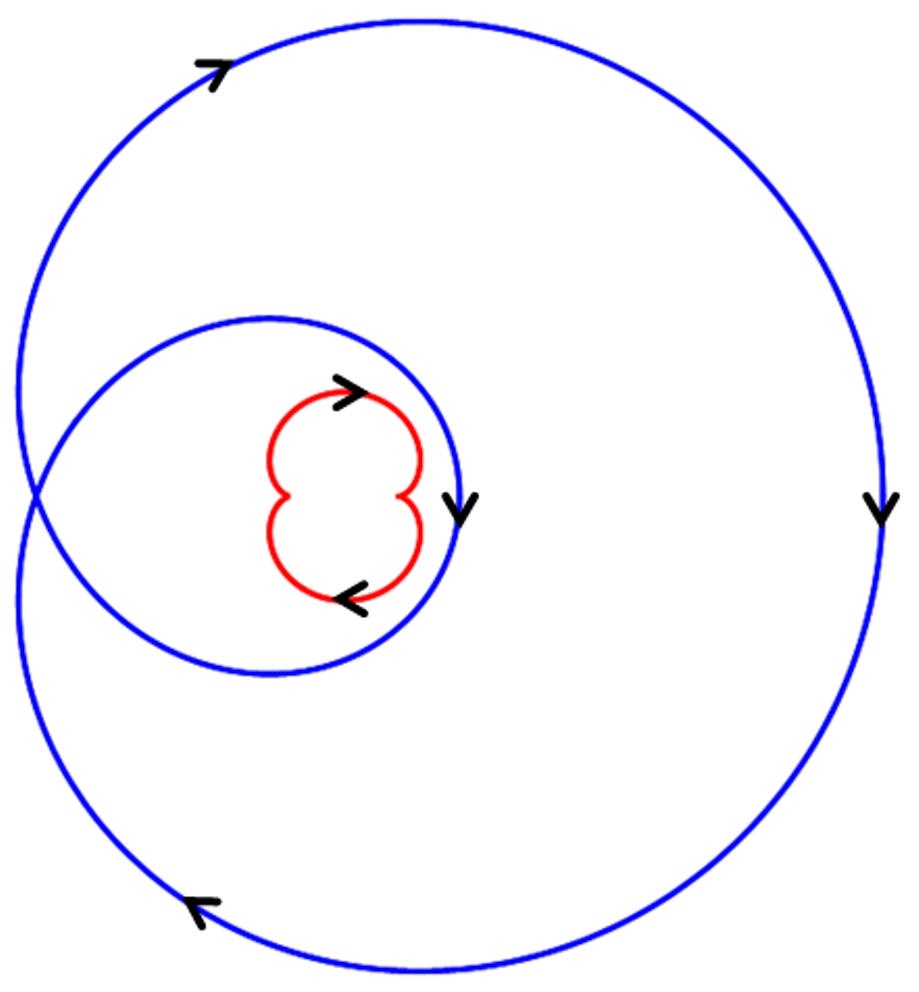}\\ \includegraphics[width=0.28\textwidth]{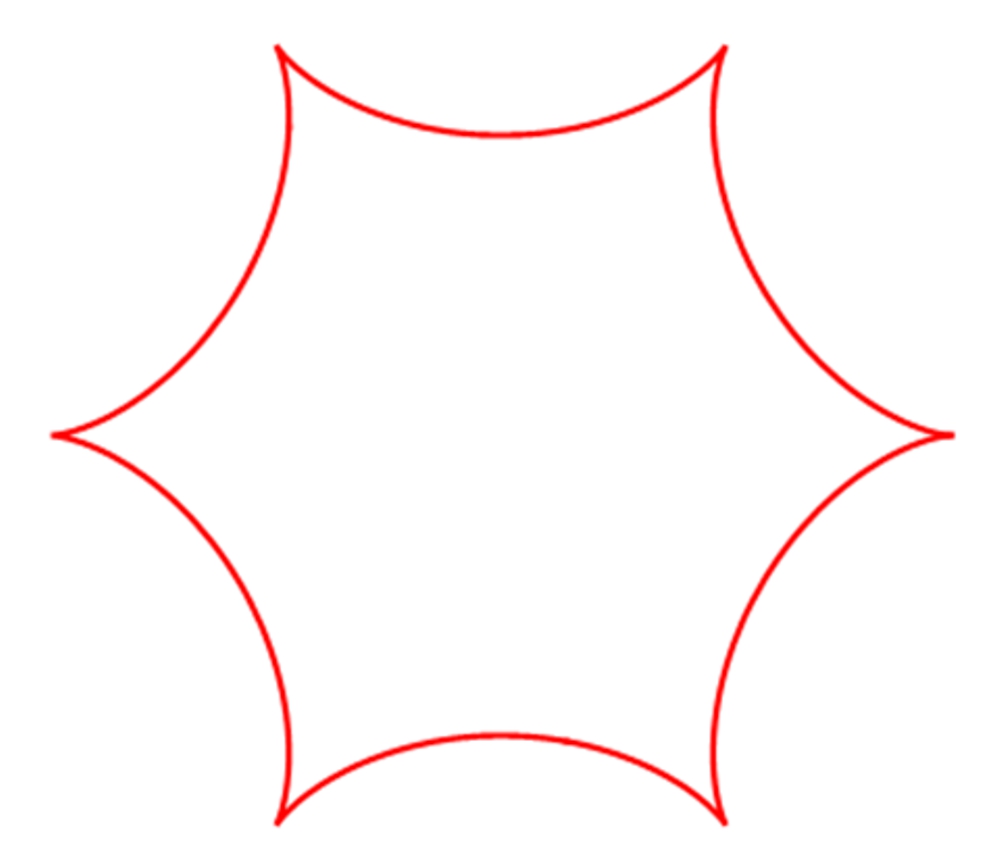}\hspace{0.2cm}\includegraphics[width=0.25\textwidth]{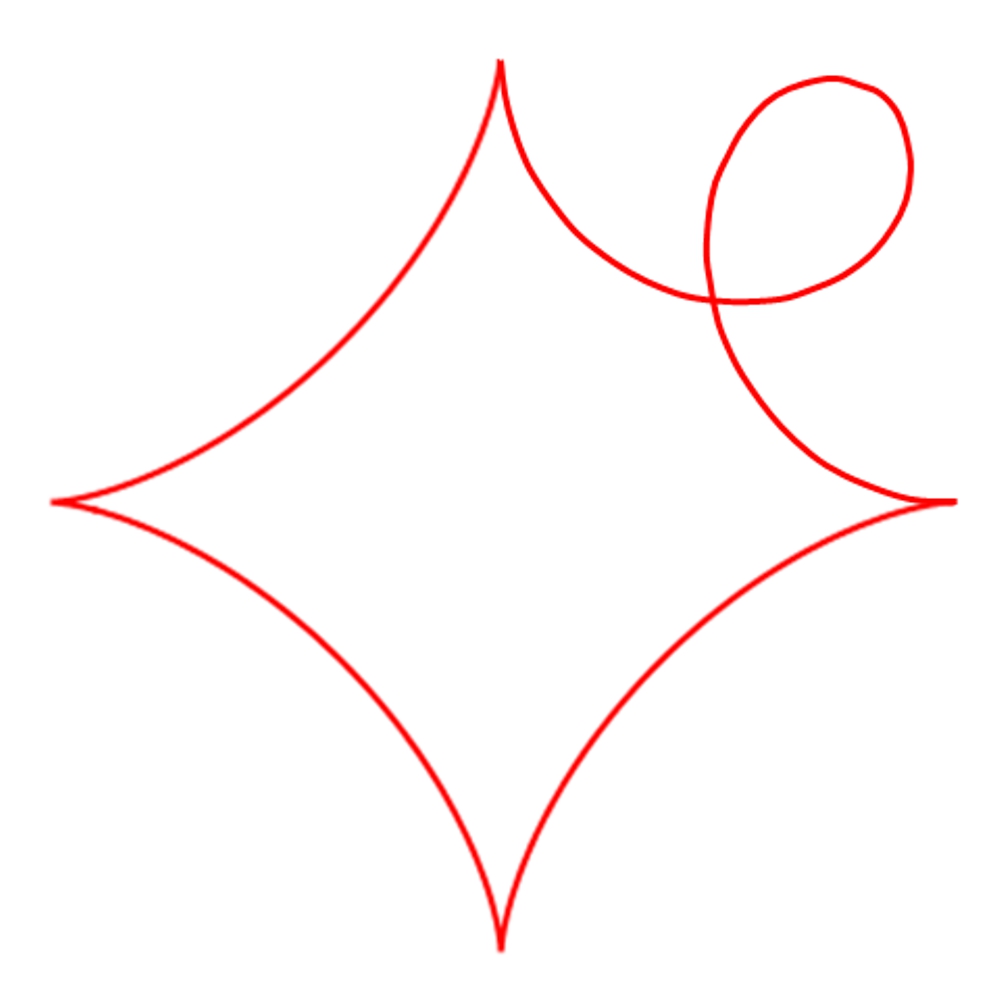}}\caption{Top row: examples of evolutes of plane curves. The left of Figure \ref{figevo}, together with the left and middle above, have $(i,n)=(2,6),(3,8),(4,10)$ respectively in accordance with \eqref{iandn}; the right has $(i,n,I)=(-1,2,-2)$ in accordance with \eqref{iandiandn}. Bottom row: these curves cannot be the conjugate locus of a generic point on a convex surface, or the evolute of a plane oval.}\label{appics}
\end{figure}

\section{Existence of smooth loops}

In the original version of this paper it was claimed that the evolute of a plane oval could not have a smooth loop, by which we mean an arc of the evolute between two consecutive cusps intersecting itself. To show this it was claimed that a $2\pi$-periodic function whose Fourier series begins at the second harmonic must have a zero in any interval of length $\pi$. Correspondence with Professors Bruna, Cufi and Reventos at the Universitat Aut\`{o}noma de Barcelona (March 2025) have shown these two claims to be false.

Their approach is to construct piecewise functions with the desired properties to act as support functions \cite{tabach}, and then fit Fourier series to them. For example the following \[ -7 \cos(2 t) + 10 \cos(4 t) - \frac{4096}{105\pi} \sin(3 t) + \frac{4096 \sin(5 t)}{315 \pi} \] is a $2\pi$-periodic function whose Fourier series starts at the second harmonic, but all four zeroes are in the interval $(0,\pi)$ and therefore there are no zeroes in an interval larger than $\pi$, thus disproving the second claim mentioned above.

Moreover if we take the piecewise function \[ \left\{ \begin{array}{ll} t+\tfrac{3}{4}\left(1-\sin\left(\tfrac{4}{3}t\right)\right)+40 & 0<t<\tfrac{3\pi}{2} \\ -3t+\tfrac{109}{108}\sin(4t)-\tfrac{7}{54}\sin(8t)+6\pi+\tfrac{3}{4}+40 & \tfrac{3\pi}{2}<t<2\pi \end{array}\right. \] and fit a Fourier series to it (of about 20 terms), then use this as the support function to generate a smooth plane oval and its associated evolute, we get the curves in Figure \ref{figloop}, thus disproving the first claim.

\begin{figure}[h]
{\includegraphics[width=0.4\textwidth]{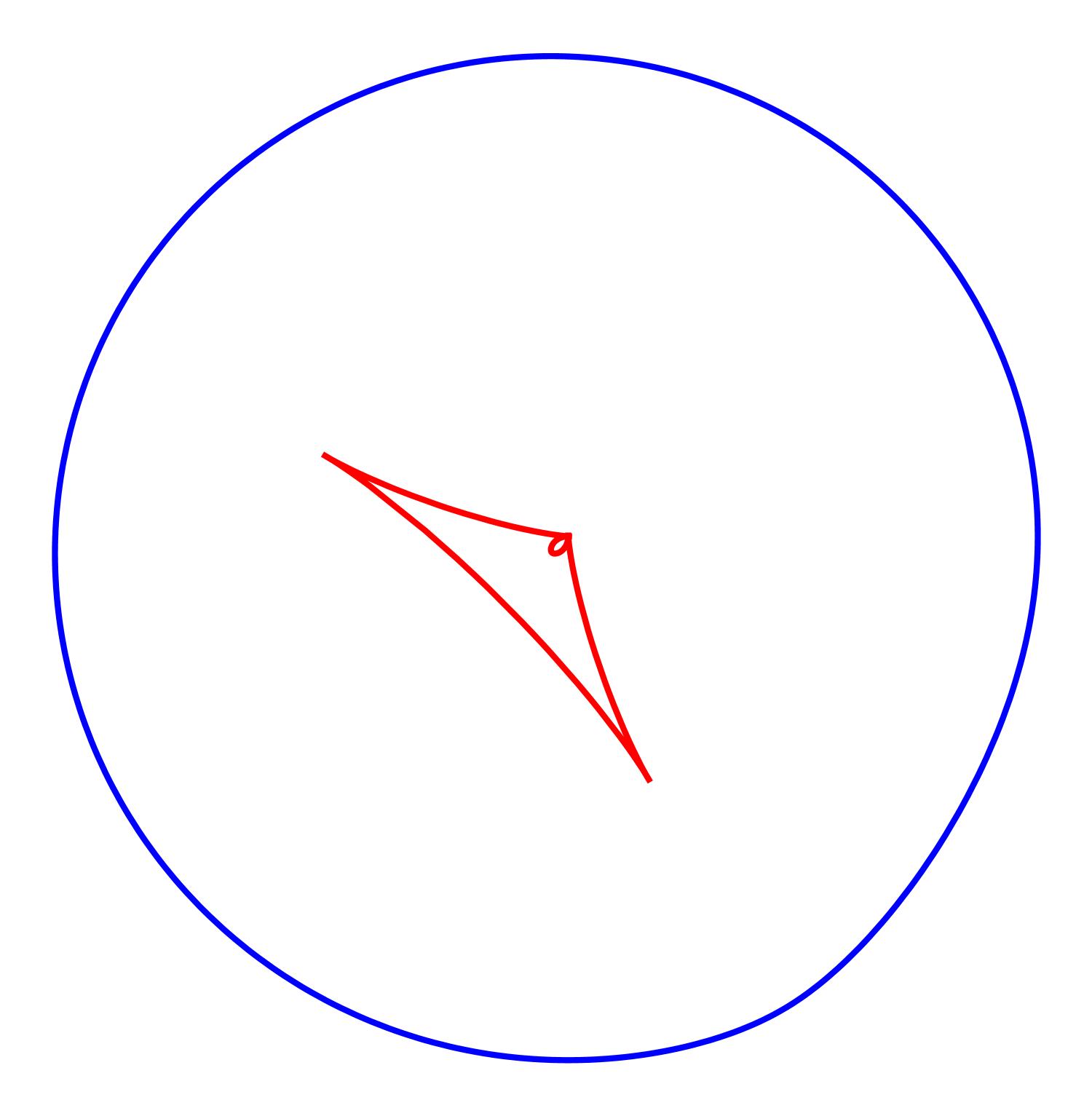}\hspace{0.3cm}\includegraphics[width=0.4\textwidth]{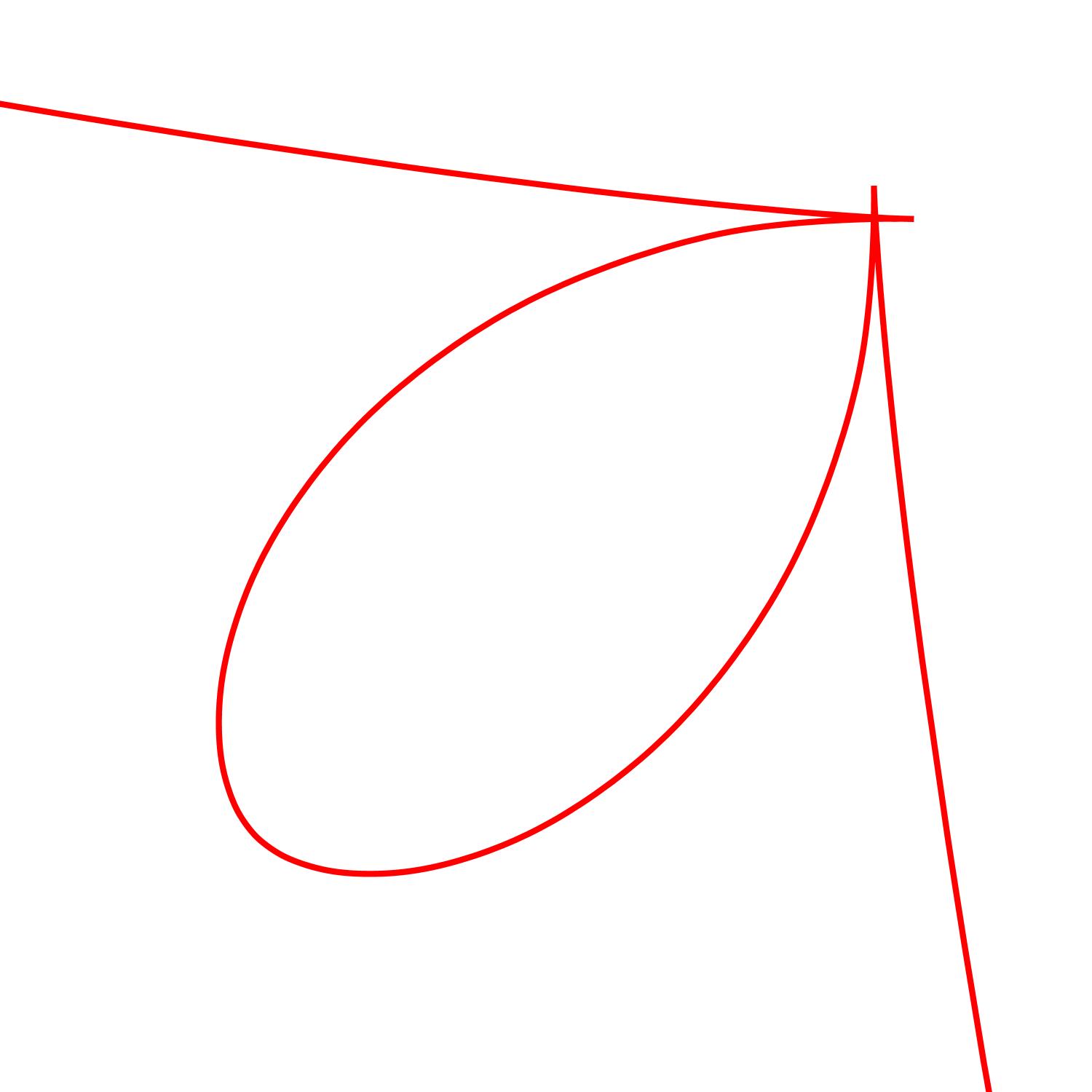}\hspace{0.3cm}}\caption{On the left a smooth ($C^\infty$) closed convex plane curve and its evolute; on the right a close-up showing the evolute with a smooth loop.}\label{figloop}
\end{figure}

\end{document}